\DeclareMathOperator{\ob}{ob}
\DeclareMathOperator{\mor}{mor}
\DeclareMathOperator{\Aut}{Aut}
\DeclareMathOperator{\Endom}{End}
\DeclareMathOperator{\Hom}{Hom}
\DeclareMathOperator{\Grd}{Grd}
\newcommand{\C}{\mathbb{C}}
\theoremstyle{plain}
\newtheorem{theorem}{Theorem}
\newtheorem{proposition}{Proposition}
\theoremstyle{definition}
\newtheorem{definition}{Definition}
\newtheorem{example}{Example}
\newtheorem{remark}{Remark}
\begin{document}

\title[Miyashita Action in Strongly Groupoid Graded Rings]{Miyashita Action in Strongly Groupoid Graded Rings}

\subjclass[2000]{16W50}
\keywords{Graded rings, Commutants, Groupoid actions, Matrix algebras}

\author[J. \"{O}inert]{Johan \"{O}inert}
\address{Johan \"{O}inert, Department of Mathematical Sciences\\
University of Copenhagen\\
Universitetsparken 5\\
DK-2100 Copenhagen \O\\
Denmark}
\email{oinert@math.ku.dk}
\urladdr{http://www.math.ku.dk/$\sim$oinert/}
\thanks{The first author was partially supported by The Swedish Research Council, The Swedish Foundation for International Cooperation in Research and Higher Education (STINT), The Crafoord Foundation, The Royal Physiographic Society in Lund, The Swedish Royal Academy of Sciences and "LieGrits", a Marie Curie Research Training Network funded by the European Community as project MRTN-CT 2003-505078.}

\author[P. Lundstr\"{o}m]{Patrik Lundstr\"{o}m}
\address{Patrik Lundstr\"{o}m, University West\\
Department of Engineering Science\\
SE-46186 Trollh\"{a}ttan\\
Sweden}
\email{patrik.lundstrom@hv.se}

\begin{abstract}
We determine the commutant of homogeneous subrings
in strongly groupoid
graded rings in terms of an action on the ring
induced by the grading.
Thereby we generalize a classical result of Miyashita from the group
graded case to the groupoid graded situation.
In the end of the article we exemplify this result.
To this end, we show, by an explicit construction,
that given a finite groupoid $G$, equipped with a nonidentity
morphism $t : d(t) \rightarrow c(t)$,
there is a strongly $G$-graded ring $R$
with the properties that each $R_s$, for $s \in G$,
is nonzero and $R_t$ is a nonfree left $R_{c(t)}$-module.
\end{abstract}

\maketitle

\section{Introduction}

Let $R$ be a ring.
By this we always mean that $R$ is an additive group
equipped with a multiplication which is associative.
If $R$ is unital, then the identity element of $R$ is denoted $1_R$.
We say that a subset $R'$ of $R$ is a subring of $R$
if it is itself a ring under the binary operations of $R$;
note that even if $R$ and $R'$ are unital it may happen that $1_{R'} \neq 1_R$.
However, we always assume that ring homomorphisms
$R \rightarrow R''$ between unital rings $R$ and $R''$ map $1_R$ to $1_{R''}$.
The group of ring automorphisms of $R$ is denoted $\Aut(R)$.

By the \emph{commutant}
of a subset $X$ of $R$, denoted $C_R(X)$, we mean the set of elements
of $R$ that commute with each element of $X$.
If $Y$ is another subset of $R$, then
$XY$ denotes the set of all finite sums of
products $xy$, for $x \in X$ and $y \in Y$.
The task of calculating $C_R(X)$ is in general
a difficult problem.
However, if $R$ is strongly group graded
and $X$ belongs to a certain class of subrings of $R$,
then, by a classical result of Miyashita \cite{miy}
(see Theorem \ref{commutanttheorem}), there is
an elegant solution to this problem formulated in terms
of a group action defined by the grading.
Namely, recall that $R$ is said to be \emph{graded} by the group $G$,
or $G$-graded, if there is a set of additive subgroups,
$R_s$, for $s \in G$, of $R$ such that $R =
\bigoplus_{s \in G} R_s$ and $R_s R_t \subseteq
R_{st}$, for $s,t \in G$.
If $H$ is a subgroup of $G$, then we let
$R_H$ denote the subring $\bigoplus_{s \in H} R_s$ of $R$;
in particular, $R_e$ is a subring of $R$,
where $e$ denotes the identity element of $G$.
If $R$ is graded by $G$ and $R_s R_t = R_{st}$, for $s,t \in G$,
then $R$ is said to be \emph{strongly graded}.
If in addition $R$ is unital, then there is a unique group action
$G \ni s \mapsto \sigma_s \in {\rm Aut}(C_R(R_e))$
of $G$ on $C_R(R_e)$ satisfying
$r_s x = \sigma_s(x) r_s$, for $s \in G$, $r_s \in R_s$
and $x \in C_R(R_e)$.
Indeed, $\sigma_s(x) = \sum_{i=1}^n a_i x b_i$, for $x \in R_e$,
where $a_i \in R_s$ and $b_i \in R_{s^{-1}}$ are chosen so
that $\sum_{i = 1}^n a_i b_i = 1_R$.
If $H \subseteq G$ and $Y \subseteq C_R(R_e)$, then we
let $Y^H$ denote the set of $y \in Y$ which
are fixed by all $\sigma_s$, for $s \in H$.

\begin{theorem}[Miyashita \cite{miy}]\label{commutanttheorem}
Let $R$ be a unital ring strongly graded by the group $G$.
If $H$ is a subgroup of $G$, then $C_R(R_H) = C_R(R_e)^H$.
\end{theorem}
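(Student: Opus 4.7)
The plan is to prove the two inclusions $C_R(R_H) \subseteq C_R(R_e)^H$ and $C_R(R_e)^H \subseteq C_R(R_H)$ separately, using the defining relation $r_s x = \sigma_s(x) r_s$ of the action together with the strong grading identity $R_s R_{s^{-1}} = R_e \ni 1_R$.

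For the inclusion $C_R(R_H) \subseteq C_R(R_e)^H$, I would first observe that, since $R_e \subseteq R_H$, any $x \in C_R(R_H)$ automatically lies in $C_R(R_e)$, so it makes sense to ask whether $x$ is fixed by $\sigma_s$ for $s \in H$. Fix such an $s$ and pick, by strong grading, elements $a_i \in R_s$, $b_i \in R_{s^{-1}}$ with $\sum_{i=1}^n a_i b_i = 1_R$. Since $R_s \subseteq R_H$, the element $x$ commutes with each $a_i$, so $\sigma_s(x) a_i = a_i x = x a_i$. Multiplying on the right by $b_i$ and summing yields $\sigma_s(x) = x$, as required.

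For the reverse inclusion, I would take $x \in C_R(R_e)^H$ and verify that $x$ commutes with every homogeneous component $R_s$ of $R_H$; since $R_H = \bigoplus_{s \in H} R_s$, this is enough by linearity. For $s \in H$ and $r_s \in R_s$, the defining identity of the action gives $r_s x = \sigma_s(x) r_s = x r_s$, where the second equality uses the hypothesis $\sigma_s(x) = x$. Hence $x \in C_R(R_H)$.

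There is no real obstacle here beyond being careful with the setup: the key input is the existence of a ``partition of unity'' $\sum a_i b_i = 1_R$ with $a_i \in R_s$, $b_i \in R_{s^{-1}}$, guaranteed by strong grading and unitality, which is exactly what lets one pass between the pointwise relation $r_s x = \sigma_s(x) r_s$ and the statement $\sigma_s(x) = x$. The argument will need almost no computation beyond that; the conceptual point is that $\sigma_s$ is \emph{defined} precisely so that commutation with $R_s$ is equivalent to being fixed by $\sigma_s$ (provided one already commutes with $R_e$).
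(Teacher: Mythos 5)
Your proof is correct. Note that the paper does not actually prove this statement itself (it is quoted from Miyashita), but your two-inclusion argument --- using the partition of unity $\sum a_i b_i = 1_R$ from strong grading together with the defining relation $r_s x = \sigma_s(x) r_s$ --- is exactly the strategy the paper employs for its groupoid generalization (Theorem \ref{maintheorem}), so it is essentially the same approach specialized to the group case.
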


In fact, Miyashita proves a more general statement
concerning $G$-actions on module endomorphisms
(see Theorems 2.12 and 2.13 in \cite{miy}).
For more details concerning this and related results,
see e.g. \cite[Section I.2]{CaenOyst}, \cite[Theorem (2.1)]{Dade},
\cite[Section 3.4]{nas2004} and \cite{oin10}.
For more details about group graded rings in general,
see e.g. \cite{nas} or \cite{nas2004}.

The purpose of this article is to
generalize Theorem \ref{commutanttheorem} from
groups to groupoids (see Theorem \ref{maintheorem}).
To be more precise, suppose that $G$ is a small category, that is such that $\mor(G)$ is a set.
The family of objects of $G$ is denoted by $\ob(G)$;
we will often identify an object in $G$ with
its associated identity morphism.
The family of morphisms in $G$ is denoted by $\mor(G)$;
by abuse of notation, we will often write $s \in G$
when we mean $s \in \mor(G)$.
The domain and codomain of a morphism $s$ in $G$ is denoted by
$d(s)$ and $c(s)$ respectively.
We let $G^{(2)}$ denote the collection of composable
pairs of morphisms in $G$, that is all $(s,t)$ in
$\mor(G) \times \mor(G)$ satisfying $d(s)=c(t)$.
For $e,f \in \ob(G)$, we let $G_{f,e}$ denote the collection of $s\in G$
with $c(s)=f$ and $d(s)=e$ and $G_e$ denotes the monoid $G_{e,e}$.
A category is called \emph{cancellative} (a \emph{groupoid})
if all its morphisms are both monomorphisms
and epimorphisms (isomorphisms).
A subcategory of a groupoid is said to be a
\emph{subgroupoid} if it is closed under inverses.
For more details concerning categories in general and
groupoids in particular, see e.g. \cite{mac} and \cite{hig} respectively.
Let $R$ be a ring.
We say that a set of additive subgroups,
$R_s$, for $s \in G$, of $R$ is a \emph{$G$-filter} in $R$ if for all $s ,t \in G$,
we have $R_s R_t \subseteq R_{st}$ if
$(s,t) \in G^{(2)}$ and
$R_s R_t = \{ 0 \}$ otherwise.
We say that a $G$-filter is \emph{strong}
if $R_s R_t = R_{st}$ for $(s,t) \in G^{(2)}$.
Furthermore, we say that the ring $R$ is \emph{graded} by the category $G$
if there is a $G$-filter,
$R_s$, for $s \in G$, in $R$ such that
$R = \bigoplus_{s \in G} R_s$.
If $R$ is graded by a strong $G$-filter,
then we say that it is \emph{strongly graded}.
Analogously to the group graded situation, if $H$ is a subcategory of $G$, then we let $R_H$ denote the
subring $\bigoplus_{s \in H} R_s$ of $R$.
We say that $R$ is \emph{locally unital} if for each $e\in \ob(G)$ the ring $R_e$ is unital, making every $R_s$,
for $s\in G$, a unital $R_{c(s)}$-$R_{d(s)}$-bimodule.
For more details concerning category graded rings,
see e.g. \cite{lu05}, \cite{lu06}, \cite{lu07} and \cite{oinlun08}.

In Section \ref{commutants}, we show that if
$R$ is a ring which is strongly graded by a groupoid $G$,
then for each $s \in G$ there is a ring isomorphism
$\sigma_s$ from $C_{R_{G_{d(s)}}}(R_{d(s)})$ to $C_{R_{G_{c(s)}}}(R_{c(s)})$
(see Definition \ref{action})
with properties similar to the ones in
the group case above (see Proposition \ref{functor}).
In the end of Section \ref{commutants}, we use
this fact to show the following result.

\begin{theorem}\label{maintheorem}
Let $R$ be a locally unital ring strongly graded by the groupoid $G$.
If $H$ is a subgroupoid of $G$, then $C_R(R_H)$
equals the set of elements of the form
$\sum_{e \in \ob(G)} x_e$ where
$x_e \in R_{G_e}$, for $e \in \ob(G) \setminus \ob(H)$,
$x_e \in C_{R_{G_e}}(R_e)$, for $e \in {\rm ob}(H)$,
and $\sigma_s(x_{d(s)}) = x_{c(s)}$, for $s \in H$.
\end{theorem}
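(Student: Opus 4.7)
The plan is to exploit the local identities $1_{R_f}$ for $f \in \ob(H)$, which lie in $R_H$ because $H$, being a subcategory, contains each identity morphism $\mathrm{id}_f$, together with the Miyashita-type isomorphisms $\sigma_s$ introduced in Section \ref{commutants} (whose defining property is $a\, x = \sigma_s(x)\, a$ for $a \in R_s$ and $x \in C_{R_{G_{d(s)}}}(R_{d(s)})$).

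For the forward inclusion I would start from an arbitrary $r \in C_R(R_H)$ and write it uniquely as $r = \sum_{s \in G} r_s$ with $r_s \in R_s$. Local unitality gives $1_{R_f} r_s = r_s$ when $c(s) = f$ and $0$ otherwise, and symmetrically on the right; equating graded components of $1_{R_f} r = r\, 1_{R_f}$ for every $f \in \ob(H)$ forces $r_s = 0$ whenever $s$ has exactly one endpoint equal to $f$, and also whenever both endpoints lie in $\ob(H)$ but $c(s) \neq d(s)$. Grouping the surviving terms by their common endpoint yields $r = \sum_{e \in \ob(G)} x_e$ with $x_e \in R_{G_e}$. For $e \in \ob(H)$, commuting $r$ with an arbitrary $y \in R_e \subseteq R_H$ kills every $x_f$ with $f \neq e$ by grading and leaves $x_e y = y x_e$, so $x_e \in C_{R_{G_e}}(R_e)$.

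The $\sigma$-equivariance is extracted by commuting $r$ with homogeneous elements $a \in R_s$ for $s \in H$: for each object $f$, $a x_f$ vanishes unless $f = d(s)$ and $x_f a$ vanishes unless $f = c(s)$, so the relation $a r = r a$ collapses to $a\, x_{d(s)} = x_{c(s)}\, a$. The defining identity of $\sigma_s$ rewrites the left-hand side as $\sigma_s(x_{d(s)})\, a$, producing $\bigl(\sigma_s(x_{d(s)}) - x_{c(s)}\bigr) a = 0$ for every $a \in R_s$. Cancellation of $a$ is obtained from strongness: choose a decomposition $1_{R_{c(s)}} = \sum_i a_i b_i$ with $a_i \in R_s$, $b_i \in R_{s^{-1}}$, multiply the previous identity on the right by $b_i$, and sum to conclude $\sigma_s(x_{d(s)}) = x_{c(s)}$.

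The reverse inclusion is a direct verification: for $r = \sum_e x_e$ of the prescribed form and any $a \in R_t$ with $t \in H$, grading considerations reduce $ar$ and $ra$ to $a\, x_{d(t)}$ and $x_{c(t)}\, a$ respectively, and these agree by $\sigma_t(x_{d(t)}) = x_{c(t)}$ together with the defining property of $\sigma_t$. I expect the main technical obstacle to be the cancellation step above, which is the only point where strongness is genuinely needed; the remaining work is careful bookkeeping with graded pieces and the local idempotents, and the interplay between the objects in $\ob(H)$ and those outside of it.
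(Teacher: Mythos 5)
Your argument follows the same route as the paper's own proof: decompose an element of $C_R(R_H)$ into homogeneous components, use local units to kill off-diagonal components, identify the diagonal blocks $x_e$ as elements of $C_{R_{G_e}}(R_e)$ for $e\in\ob(H)$, and extract $\sigma_s(x_{d(s)})=x_{c(s)}$ by commuting with $R_s$; the reverse inclusion is the same direct verification. Your handling of the equivariance step is more explicit than the paper's (which just cites the uniqueness clause of Proposition \ref{functor}) and is correct: from $\bigl(\sigma_s(x_{d(s)})-x_{c(s)}\bigr)a=0$ for all $a\in R_s$, right multiplication by the $b_i$ in a decomposition $\sum_i a_ib_i=1_{R_{c(s)}}$, together with the fact that $R_{G_{c(s)}}$ is a unital right $R_{c(s)}$-module, gives the cancellation.

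There is one step where your (legitimate) restraint exposes a gap that the paper's proof conceals. You commute $r$ only with $1_{R_f}$ for $f\in\ob(H)$, which is all that membership in $C_R(R_H)$ entitles you to, and this kills $r_s$ only when $c(s)\neq d(s)$ and at least one endpoint of $s$ lies in $\ob(H)$. A component $r_s$ with $c(s)\neq d(s)$ and both endpoints outside $\ob(H)$ survives, satisfies $R_Hr_s=r_sR_H=\{0\}$, hence lies in $C_R(R_H)$, and cannot be grouped into any $R_{G_e}$; so the sentence ``grouping the surviving terms by their common endpoint'' does not go through, and indeed the stated description of $C_R(R_H)$ can fail here. (Take $G$ the pair groupoid on three objects, $R=M_3(K)$ with $R_{(i,j)}=Ke_{ij}$, and $H=\{\identity_1\}$: then $e_{23}\in C_R(R_H)$ but $e_{23}\notin\sum_{e}R_{G_e}$.) The paper's proof asserts $1_{R_e}y=y1_{R_e}$ for \emph{all} $e\in\ob(G)$, which is unjustified for $e\notin\ob(H)$ and amounts to tacitly assuming $\ob(H)=\ob(G)$. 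Under that wideness assumption your argument is complete as written; otherwise both proofs require the statement to be amended to admit components supported on $G_{f,e}$ with $e,f\notin\ob(H)$.
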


There is a well-developed theory for invertible bimodules of \emph{unital} rings
(see \cite{CaenOyst}, \cite{LeBruynVanOystaeyenVanDenBergh} and \cite{miy}).
However, in order to be able to generalize this theory to locally unital groupoid graded rings,
and in particular in order to show Theorem \ref{maintheorem}, we need to extend
the theory slightly (see Section \ref{MiyashitaAction}).
Namely, given unital subrings $A$ and $B$ of a (not necessarily unital) ring $R$ we say
that a unital $A$-$B$-submodule $X$ of $R$ is invertible if there is a unital $B$-$A$-submodule
$X^{-1}$ of $R$ such that $XX^{-1}=A$ and $X^{-1}X=B$. The collection of invertible
submodules of $R$ forms a groupoid
(see Definition \ref{DefEtt} for the details).

In Section \ref{examples}, we illustrate
Theorem \ref{commutanttheorem} and Theorem \ref{maintheorem}
in two cases (see Example \ref{simpleexamples}).
To this end, we make an explicit construction (see Proposition \ref{categorygradedex})
of graded rings, which is inspired by \cite{das99}. A particular case of our construction
implies the following result.

\begin{theorem}\label{groupoidexample}
Given a finite groupoid $G$, equipped with a nonidentity
morphism $t : d(t) \rightarrow c(t)$,
there is a unital strongly $G$-graded ring $R$
with the properties that each $R_s$, for $s \in G$,
is nonzero and $R_t$ is nonfree as a left $R_{c(t)}$-module.
\end{theorem}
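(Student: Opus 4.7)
The plan is to apply Proposition~\ref{categorygradedex} to a family of matrix blocks whose sizes are chosen so that $R_t$ fails to be free. Fix a field $k$ and assign a positive integer $n_u$ to each morphism $u \in \mor(G)$, setting $V_u = k^{n_u}$. For each $s \in G$, take
\[
R_s \;=\; \bigoplus_{u\colon c(u)=d(s)} \Hom_k(V_u,\, V_{su}),
\]
and multiply on $R = \bigoplus_{s\in G} R_s$ by composition: $f \in \Hom(V_u, V_{su}) \subseteq R_s$ times $g \in \Hom(V_v, V_{s'v}) \subseteq R_{s'}$ is $f\circ g \in \Hom(V_v, V_{(ss')v})$ when $s'v = u$, and zero otherwise.

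I would first confirm that this data is of the type handled by Proposition~\ref{categorygradedex}, so that it produces a locally unital strongly $G$-graded ring. Associativity reduces to that of $G$ and of composition of linear maps; the ring attached to an object $e$ is $R_{\identity_e} = \prod_{u\colon c(u)=e} M_{n_u}(k)$, which is unital, and summing these units over the finitely many objects gives a global $1_R$. The strong-grading equality $R_s R_{s'} = R_{ss'}$ for composable $(s,s')$ reduces to the elementary fact that every linear map $V_v \to V_{(ss')v}$ is a sum of rank-one maps, each of which factors through a chosen nonzero vector of $V_{s'v}$; this holds independently of the choice of $(n_u)$, and every $R_s$ is nonzero since every $V_u$ is.

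The substantive step is the choice of the $n_u$. I would set $n_{t^{-1}} = 2$ and $n_u = 1$ for every other morphism. Re-indexing the sum defining $R_t$ by $w = tu$, one obtains
\[
R_t \;\cong\; \bigoplus_{w\colon c(w)=c(t)} V_w^{\,n_{t^{-1}w}} \qquad \text{as a left } R_{c(t)}\text{-module,}
\]
where $R_{c(t)} = \prod_{w\colon c(w)=c(t)} M_{n_w}(k)$. If $R_t$ were free of rank $r$ over $R_{c(t)}$, then $n_{t^{-1}w} = r \cdot n_w$ for every admissible $w$. Taking $w = \identity_{c(t)}$ forces $r = n_{t^{-1}} = 2$, and a second choice of $w$ breaks the relation: if $c(t) \neq d(t)$, take $w = t$ to obtain $n_{\identity_{d(t)}} = 2 \cdot n_t$, i.e.\ $1 = 2$; if $c(t) = d(t)$, take $w = t^{-1}$ to obtain $n_{t^{-2}} = 2 \cdot n_{t^{-1}}$, i.e.\ $1 = 4$ (using that $t^{-2} \neq t^{-1}$ because $t$ is nonidentity). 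Either way, $R_t$ is not free.

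The only real decision in the argument is to inflate $n_{t^{-1}}$ rather than $n_t$: this is precisely what makes a single test $w$ break freeness uniformly in the isotropy case (where $t^{-1}$ is itself a morphism with codomain $c(t)$) and the non-isotropy case (where $t$ plays that role). Everything else is routine matrix bookkeeping and the invocation of Proposition~\ref{categorygradedex} to legitimize the construction.
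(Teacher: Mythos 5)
Your argument is correct, and up to a transposition of conventions your ring is an instance of the paper's own construction: choosing in Proposition \ref{categorygradedex} the list $S$ that contains each morphism $v\in G$ with multiplicity $n_{v^{-1}}$ reproduces exactly $R_s=\bigoplus_{c(u)=d(s)}\Hom_k(V_u,V_{su})$, so your example is of the same shape as the paper's, namely ``all multiplicities $1$, one morphism doubled.'' The differences are in the execution. The paper doubles the identity $\identity_{c(t)}$ and kills freeness by a crude rank count over $K$ ($\dim_K R_t=m+1$ versus $\dim_K R_{c(t)}=m+3$), which works over any nonzero commutative ring but, as written, treats a morphism $t$ joining two distinct objects; it then reduces the general case to connected components and quotes \cite{das99} for one-object (group) components. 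You instead double $t$ itself (i.e.\ set $n_{t^{-1}}=2$) and compare the isotypic multiplicities of $R_t\cong\bigoplus_{c(w)=c(t)}V_w^{\,n_{t^{-1}w}}$ with those of a free module $\bigoplus_w V_w^{\,rn_w}$ over the semisimple ring $R_{c(t)}\cong\prod_w M_{n_w}(k)$; this handles an arbitrary nonidentity $t$ (isotropy or not) uniformly, with no connected-component reduction and no appeal to the group-graded case, at the mild cost of requiring $k$ to be a field so that the multiplicities $n_{t^{-1}w}$ are isomorphism invariants (the $V_w$ being pairwise non-isomorphic simple left $R_{c(t)}$-modules). Two small points to make explicit: the rank $r$ of a putative free basis is a well-defined positive integer because $R_{c(t)}$ is semisimple Artinian and $R_t\neq 0$; and the appeal to Proposition \ref{categorygradedex}(f) should be phrased for the transposed list (or the needed properties checked directly, as you sketch), since the proposition's grading uses right translation $s_i\mapsto s_i s$ rather than $u\mapsto su$.
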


We find that Theorem \ref{groupoidexample} is interesting in its
own right since, in general, every component $R_s$, for $s \in G$,
of a strongly groupoid graded ring $R$, is finitely
generated and projective as a left $R_{c(s)}$-module
(see Proposition \ref{cancellable}(e)).

\section{Miyashita Action}\label{MiyashitaAction}

Throughout this section, let $A$, $B$, $C$, $R$ and $S$ be rings
such that $A$, $B$ and $C$ are unital subrings of $R$.
Furthermore, let $M$, $N$ and $P$ be $R$-$S$-bimodules;
we let $\Hom_{R,S}(M, N)$ denote the collection of
simultaneously left $R$-linear and right
$S$-linear maps $M \rightarrow N$.

\begin{definition}\label{DefEtt}
We say that a unital $A$-$B$-submodule $X$ of $R$ is \emph{invertible}
in $R$ if there is a unital $B$-$A$-submodule $X^{-1}$ of $R$
such that $X X^{-1} = A$ and $X^{-1} X = B$.
Let $\Grd(R)$ denote the groupoid having subrings of $R$
as objects and invertible $A$-$B$-submodules $X$ of $R$ as morphisms,
for subrings $A$ and $B$ of $R$;
in that case we will write $X : B \rightarrow A$.
If $Y : C \rightarrow B$ is an invertible $B$-$C$-submodule of $R$,
then the composition of $X$ and $Y$ is defined as
the $A$-$C$-submodule $XY$ of $R$.
The identity morphism $A \rightarrow A$ is $A$ itself.
\end{definition}

\begin{proposition}\label{projective}
Every $X : B \rightarrow A$ in $\Grd(R)$
is finitely generated and projective
both as a left $A$-module
and a right $B$-module.
\end{proposition}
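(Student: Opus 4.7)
The plan is to use the dual basis lemma, with the finite generating systems coming from the decompositions of the unit elements $1_A$ and $1_B$.

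First I would exploit the hypothesis $XX^{-1}=A$. Since $A$ is unital and $XX^{-1}$ consists of finite sums of products $xy$ with $x\in X$, $y\in X^{-1}$, there exist finitely many elements $x_1,\dots,x_n\in X$ and $y_1,\dots,y_n\in X^{-1}$ with $1_A=\sum_{i=1}^n x_i y_i$. Symmetrically, from $X^{-1}X=B$, I obtain $x_1',\dots,x_m'\in X$ and $y_1',\dots,y_m'\in X^{-1}$ with $1_B=\sum_{j=1}^m y_j' x_j'$.

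Next, for an arbitrary element $x\in X$, I would write
\[
x = 1_A \cdot x = \sum_{i=1}^n x_i(y_i x), \qquad x = x \cdot 1_B = \sum_{j=1}^m (xy_j') x_j'.
\]
Here $y_i x \in X^{-1}X = B$ and $xy_j' \in XX^{-1} = A$, where I use that $X$ is a unital $A$-$B$-bimodule so that multiplication by $1_A$ on the left and $1_B$ on the right are the identity. This already shows that $\{x_1,\dots,x_n\}$ generates $X$ as a right $B$-module and $\{x_1',\dots,x_m'\}$ generates $X$ as a left $A$-module, yielding finite generation in both cases.

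For projectivity, I would invoke the dual basis lemma. Define right $B$-linear maps $\varphi_i : X \to B$ by $\varphi_i(x) = y_i x$ and left $A$-linear maps $\psi_j : X \to A$ by $\psi_j(x) = xy_j'$. Linearity is straightforward since left multiplication by $y_i$ commutes with right multiplication by elements of $B\subseteq R$, and analogously on the other side. The identities above then read $x = \sum_i x_i \varphi_i(x) = \sum_j \psi_j(x) x_j'$, giving dual bases $(x_i,\varphi_i)$ and $(x_j',\psi_j)$, which establishes projectivity as a right $B$-module and as a left $A$-module respectively. No step here is a serious obstacle; the only point requiring a moment's care is checking that $X$ being a \emph{unital} $A$-$B$-bimodule (in the sense that $1_A$ and $1_B$ act as the identity) lets us start the two key computations, and this is built into Definition \ref{DefEtt}.
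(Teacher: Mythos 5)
Your proof is correct and follows essentially the same route as the paper: decompose $1_A$ (resp.\ $1_B$) as a finite sum of products from $XX^{-1}$ (resp.\ $X^{-1}X$), and apply the dual basis lemma with the maps $x \mapsto y_i x$ and $x \mapsto x y_j'$. The paper writes out only the right $B$-module case and dismisses the other as analogous, whereas you spell out both halves explicitly; there is no substantive difference.
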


\begin{proof}
By the assumptions $A = X X^{-1}$ and hence there
is a positive integer $n$ and $x_i \in X$
and $y_i \in X^{-1}$, for $i \in \{1,\ldots,n\}$,
such that $1_A = \sum_{i=1}^n x_i y_i$.
For each $i \in \{1,\ldots,n\}$ define a
right $B$-linear $f_i : X \rightarrow B$ by $f_i(x)= y_i x$, for $x \in X$.
If $x \in X$, then
$x = 1_A x = \sum_{i=1}^n  x_i y_i x = \sum_{i=1}^n x_i f_i(x)$.
Hence, by the dual basis lemma (see e.g \cite[p. 23]{Lam}),
we get that $X$ is a projective right $B$-module
generated by $x_1,\ldots,x_n$.
Analogously, one can prove that $X$ is a finitely
generated projective left $A$-module.
\end{proof}

\begin{proposition}\label{miyashita}
If $X : B \rightarrow A$ is in $\Grd(R)$ and
$f \in \Hom_{B,S}(BM, BN)$, then there is a unique
$f^X \in \Hom_{A,S}(AM, AN)$ satisfying
\begin{equation}\label{property}
f^X(xm) = xf(1_B m)
\end{equation}
for all $x \in X$ and all $m \in M$.
Moreover, the following properties hold:
\begin{enumerate}[{\rm (a)}]
\item $0^X = 0$ and ${\rm id}_{BM}^X = {\rm id}_{AM}$;

\item if $g \in \Hom_{B,S}(B M, B N)$, then
$(f+g)^X = f^X + g^X$;

\item if $g \in \Hom_{B,S}(B N, B P)$, then
$(g \circ f)^X = g^X \circ f^X$;

\item if $g \in \Hom_{A,S}(AM,AN)$, then $g^A = g$;

\item if $Y : C \rightarrow B$ in $\Grd(R)$
and $g \in \Hom_{C,S}(CM,CN)$, then $(g^Y)^X = g^{XY}$.
\end{enumerate}
\end{proposition}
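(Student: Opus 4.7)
The plan is to build $f^X$ explicitly from a dual basis supplied by Proposition \ref{projective}. Concretely, I would choose $x_1,\ldots,x_n \in X$ and $y_1,\ldots,y_n \in X^{-1}$ with $\sum_{i=1}^n x_i y_i = 1_A$, and set
\[
f^X(m) = \sum_{i=1}^n x_i f(y_i m) \qquad (m \in AM).
\]
This is meaningful because $X^{-1}$ is a unital left $B$-module, so $y_i = 1_B y_i$ forces $y_i m \in BM$, whence $f(y_i m)$ is defined and lies in $BN \subseteq AN$.

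Next I would verify that $f^X$ is right $S$-linear (immediate from $f$) and left $A$-linear. The latter is the only subtle bookkeeping step: for $a \in A$, I would expand $y_i a = \sum_j (y_i a x_j) y_j$ using $1_A = \sum_j x_j y_j$, observing that the coefficient $y_i a x_j \in X^{-1} A X \subseteq X^{-1} X = B$ is in $B$, so the $B$-linearity of $f$ on $BM$ applies; the inner sum then collapses via $\sum_i x_i y_i = 1_A$ and yields $f^X(am) = a f^X(m)$. To establish the defining formula \eqref{property}, for $x \in X$ and $m \in M$ I would rewrite $y_i x m = (y_i x)(1_B m)$ (using $x \cdot 1_B = x$), note that $y_i x \in X^{-1} X = B$, and conclude from the $B$-linearity of $f$ and $\sum_i x_i y_i = 1_A$ that $f^X(xm) = x f(1_B m)$.

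Uniqueness then comes essentially for free: any $A$-$S$-bilinear $g : AM \to AN$ satisfying \eqref{property} is pinned down on every $p \in AM$ via $p = 1_A p = \sum_i x_i(y_i p)$, which forces $g(p) = \sum_i x_i f(y_i p)$; this simultaneously shows that $f^X$ does not depend on the choice of dual basis. With uniqueness in hand, each of (a)--(e) reduces to checking that a proposed map meets the defining formula on pure elements $xm$. Parts (a) and (b) are immediate; for (c) one uses $f(1_B m) \in BN$, so $1_B f(1_B m) = f(1_B m)$ and hence $g^X(x f(1_B m)) = x g(f(1_B m))$; for (d), the $A$-linearity of $g$ on $AM$ gives $g(am) = a g(1_A m)$ directly; and for (e), the computation $(g^Y)^X(xy \cdot m) = x g^Y(1_B \cdot ym) = x g^Y(ym) = x(y g(1_C m)) = (xy) g(1_C m)$ shows that $(g^Y)^X$ satisfies the formula characterizing $g^{XY}$ on the generators $xy$ of $XY$, so the two agree.

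The main obstacle is the left $A$-linearity of $f^X$, because the explicit formula is naturally biased: the dual basis resolves $1_A$, but in $f^X(am)$ the scalar $a$ sits on the wrong side of $y_i$ and cannot be passed through $f$ until it has been replaced by a $B$-valued coefficient. The trick of inserting $1_A = \sum_j x_j y_j$ to convert $y_i a$ into $\sum_j (y_i a x_j) y_j$ with $y_i a x_j \in B$ is the heart of the argument; once this is internalized, every remaining verification is a mechanical application of the formula together with the unital identities $x \cdot 1_B = x$ for $x \in X$ and $1_B y = y$ for $y \in X^{-1}$.
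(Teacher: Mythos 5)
Your proof is correct and follows essentially the same route as the paper: define $f^X$ by the dual-basis formula $\sum_i x_i f(y_i\,\cdot)$ coming from $\sum_i x_i y_i = 1_A$, use insertion of this identity to push scalars through $f$, and deduce (a)--(e) from the uniqueness of the map satisfying \eqref{property}. The only (harmless) reorganizations are that you obtain independence of the dual basis as a corollary of uniqueness rather than by the paper's direct computation, and in (e) you verify the characterizing property on generators $xy$ of $XY$ instead of exhibiting an explicit dual basis $\{x_i x_j', y_j' y_i\}$ for $XY$.
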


\begin{proof}
Fix $X : B \rightarrow A$ in $\Grd(R)$ and $f \in \Hom_{B,S}(BM,BN)$.
Since $1_A \in A = XX^{-1}$, there is a positive integer $n$
and $x_i \in X$, $y_i \in X^{-1}$, for $i \in \{1,\ldots,n\}$,
such that $\sum_{i=1}^n x_i y_i = 1_A$.
If a map $f^X \in \Hom_{A,S}(AM,AN)$ satisfying (\ref{property}) exists,
then it is unique, since
$$f^X(a m) = f^X(a 1_A m) = f^X \left( a \sum_{i=1}^n x_i y_i m \right) =$$
\begin{equation}\label{define}
= \sum_{i=1}^n f^X(a x_i y_i m)  = a \sum_{i=1}^n x_i f(y_i m)
\end{equation}
for all $a \in A$ and all $m \in M$; define $f^X(a m)$ by the last part of (\ref{define}).
We must show that $f^X$ does not depend on the
choice of the $x_i$'s and $y_i$'s.
To this end, suppose that $p$ is a positive integer and
$x_j' \in X$ and $y_j' \in X^{-1}$, for $j \in \{1,\ldots,p\}$,
are chosen so that $\sum_{j=1}^p x_j' y_j' = 1_A$.
Take $a \in A$ and $m \in M$.
Then, since $y_i x_j' \in B$, we get that
$$a \sum_{j=1}^p x_j' f(y_j' m) =
a \sum_{j=1}^p 1_A x_j' f(y_j' m) =
a \sum_{j=1}^p \sum_{i=1}^n x_i y_i x_j' f(y_j' m) =$$
$$= a \sum_{j=1}^p \sum_{i=1}^n x_i f( y_i x_j' y_j' m)
= a \sum_{i=1}^n \sum_{j=1}^p x_i f( y_i x_j' y_j' m) =$$
$$= a \sum_{i=1}^n x_i f \left( y_i \sum_{j=1}^p x_j' y_j' m \right)
= a \sum_{i=1}^n x_i f(y_i 1_A m)
= a \sum_{i=1}^n x_i f(y_i m).$$

Now we show that (\ref{property}) holds.
If $x \in X$ and $m \in M$, then,
since $y_i x \in X^{-1}X = B$ for $i\in \{1,\ldots,n\}$, we get that
$$f^X(xm) = \sum_{i=1}^n x_i f(y_i x m) =
\sum_{i=1}^n x_i f(y_i x 1_B m) =$$
$$= \sum_{i=1}^n x_i y_i x f(1_B m)=
1_A xf(1_B m) = xf(1_B m).$$

Next we show that $f^X \in \Hom_{A,S}(AM, AN)$.
It is clear that $f^X$ respects addition and right $S$-multiplication.
Now we show that $f^X$ respects left $A$-multiplication.
To this end, suppose that
$m \in M$ and $a,a' \in A$.
Since $ax_i \in X$, for $i \in \{1,\ldots,n\}$, we get,
by (\ref{property}), that
$$f^X(aa'm) = f^X(a 1_A a' m ) =
f^X \left( a \sum_{i=1}^n x_i y_i a' m  \right) =
\sum_{i=1}^n f^X(a x_i y_i a' m ) =$$
$$= \sum_{i=1}^n a x_i f(1_B y_i a' m ) =
a \left( \sum_{i=1}^n x_i f(y_i a' m) \right)  =
a f^X(1_A a' m) = a f^X(a'm).$$

(a) and (b) follow immediately.

(c) It is clear that both $(g \circ f)^X$ and
$g^X \circ f^X$ belong to $\Hom_{A,S}(A M, A P)$.
Moreover, if $x \in X$ and $m \in M$, then we get that
$$ (g^X \circ f^X)(xm) = g^X(f^X(xm)) =
g^X(x f(1_B m)) = $$
$$= x g(f(1_B m)) = x (g \circ f)(1_B m).$$
By uniqueness of the map $h^X$ in $\Hom_{A,S}(A M, A P)$
satisfying $h^X(xm) = xh(1_B m)$, for $x \in X$
and $m \in M$, it follows that $(g \circ f)^X = g^X \circ f^X$.

(d) This follows if we let
$x_1=y_1=1_A$ and $x_i = y_i = 0$, for $i \in \{2,\ldots,n\}$.

(e) Suppose that $g \in \Hom_{C,S}(CM,CN)$
and that $Y : C \rightarrow B$. Take a positive
integer $p$ and $x_j' \in Y$, $y_j' \in Y^{-1}$,
for $j \in \{1,\ldots,p\}$, such that
$\sum_{j=1}^p x_j' y_j' = 1_B$.
If $a \in A$ and $m \in M$, then
$$(g^Y)^X (am) = a \sum_{i=1}^n x_i g^Y(y_i m) =
a \sum_{i=1}^n \sum_{j=1}^p x_i x_j' g(y_j' y_i m) = g^{XY}(am)$$
since for each $i$ and $j$ we have
$x_i x_j' \in XY$, $y_j' y_i \in Y^{-1}X^{-1} = (XY)^{-1}$ and
$$\sum_{i=1}^n \sum_{j=1}^p x_i x_j' y_j' y_i =
\sum_{i=1}^n x_i \left( \sum_{j=1}^p x_j' y_j' \right) y_i=
\sum_{i=1}^n x_i 1_B y_i = \sum_{i=1}^n x_i y_i = 1_A.$$
\end{proof}

\begin{definition}
Suppose that $G$ and $H$ are categories.
Recall that an action of $G$ on $H$
is a functor \ $\widehat \ : G \rightarrow H$.
If $H$ is a category of abelian categories,
then we say that an action \ $\widehat{}$ \ of $G$
on $H$ is additive if for each morphism $g$ in $G$,
the functor $\widehat{g}$ respects
the additive structures on the hom-sets.
\end{definition}

\begin{remark}\label{firstremark}
For each subring $A$ of $R$, we let $\Hom_{A,S}$ denote
the abelian category having $A$-$S$-bimodules $AM$ as objects,
for $R$-$S$-bimodules $M$, and $A$-$S$-bimodule maps
$f : AM \rightarrow AN$ as morphisms, for $R$-$S$-bimodules
$M$ and $N$. Furthermore, we let $\Hom_S$ denote the
category having $\Hom_{A,S}$ as objects, for subrings
$A$ of $R$, and functors $\Hom_{B,S} \rightarrow \Hom_{A,S}$
as morphisms, for subrings $A$ and $B$ of $R$.
Then {\rm Proposition \ref{miyashita}} can be formulated
by saying that there is a unique additive action \ $\widehat{}$ \
of $\Grd(R)$ on $\Hom_S$ subject to the condition that
for any $X : B \rightarrow A$ in $\Grd(R)$,
any $R$-$S$-bimodules $M$ and $N$,
and any $f \in \Hom_{B,S}(BM, BN)$, we have that
$\widehat{X}(f)(xm) = xf(1_B m)$
for all $x \in X$ and all $m \in M$.
\end{remark}

\begin{proposition}\label{ringproposition}
For any $X : B \rightarrow A$ in $\Grd(R)$
there is a unique ring isomorphism
$\sigma^X : C_{BR}(B) \rightarrow C_{AR}(A)$
with the property that
$\sigma^X(r) x = x r$, for $r \in C_{BR}(B)$
and $x \in X$.
If we choose a positive integer $n$ and
$x_i \in X$ and $y_i \in X^{-1}$, for
$i \in \{1 , \ldots , n\}$, satisfying
$\sum_{i=1}^n x_i y_i = 1_A$, then
$\sigma^X (r) = \sum_{i=1}^n x_i r y_i$,
for $r \in C_{BR}(B)$.
Moreover, $\sigma^A = {\rm id}_{C_{AR}(A)}$
and if $X : B \rightarrow A$ and
$Y : C \rightarrow B$ belong to $\Grd(R)$,
then $\sigma^{XY} = \sigma^X \circ \sigma^Y$.
\end{proposition}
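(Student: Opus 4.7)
The plan is to mimic the explicit dual-basis argument used in the proof of Proposition \ref{miyashita}. Fix a decomposition $1_A = \sum_{i=1}^n x_i y_i$ with $x_i \in X$ and $y_i \in X^{-1}$, and simply \emph{define} $\sigma^X(r) := \sum_{i=1}^n x_i r y_i$ for $r \in C_{BR}(B)$. To confirm this is independent of the choice of $x_i,y_i$, I would insert a second decomposition $1_A = \sum_j x_j' y_j'$ into the expression and exploit the key fact that each $y_i x_j' \in X^{-1}X = B$ commutes with $r$, in exact parallel with the analogous step in Proposition \ref{miyashita}. The identities $XX^{-1}=A$, $X^{-1}X=B$, together with $r \in C_{BR}(B)$, drive essentially every subsequent verification.

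Next I would verify that $\sigma^X(r)$ actually lies in $C_{AR}(A)$. That it lies in $AR$ is immediate from $1_A x_i = x_i$ and $y_i 1_A = y_i$; commutation with $a \in A$ I would show by inserting $\sum_j x_j y_j = 1_A$ between $y_i$ and $a$ in $\sigma^X(r) a$, producing a middle factor $y_i a x_j \in X^{-1}AX = B$ which commutes with $r$, and then collapsing the resulting double sum to $a\,\sigma^X(r)$. The characterising identity $\sigma^X(r) x = x r$ for $x \in X$ follows because $y_i x \in B$ commutes with $r$, and uniqueness of $\sigma^X$ is then one line: any candidate $\sigma$ satisfies $\sigma(r) = \sigma(r)\,1_A = \sum_i \sigma(r) x_i\, y_i = \sum_i x_i r y_i$.

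For the ring structure, additivity is visible from the formula and $\sigma^X(1_B) = \sum_i x_i\,1_B\,y_i = 1_A$. Multiplicativity I would obtain by expanding $\sigma^X(r)\sigma^X(r') = \sum_{i,j} x_i r y_i x_j r' y_j$, using $y_i x_j \in B$ to move $r'$ past the middle, and collapsing via $\sum_j x_j y_j = 1_A$ and $y_i 1_A = y_i$ to land on $\sigma^X(rr')$. For $\sigma^A = \mathrm{id}_{C_{AR}(A)}$, the trivial basis $x_1 = y_1 = 1_A$ gives the claim. For $\sigma^{XY} = \sigma^X \circ \sigma^Y$, I would use the composite dual basis $\{x_i x_j'\}$, $\{y_j' y_i\}$ for $XY$ produced in Proposition \ref{miyashita}(e): substituting into the defining formula gives $\sum_{i,j} x_i x_j' r y_j' y_i = \sum_i x_i \bigl(\sum_j x_j' r y_j'\bigr) y_i = \sigma^X(\sigma^Y(r))$. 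Applying this identity with $Y = X^{-1}$, and symmetrically with the roles of $X$ and $X^{-1}$ interchanged, exhibits $\sigma^{X^{-1}}$ as a two-sided inverse of $\sigma^X$ and completes the isomorphism claim.

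The main obstacle I expect is bookkeeping rather than any conceptual difficulty: at each step one must verify that the particular product under consideration ($y_i x_j'$, $y_i a x_j$, $y_i x$, $y_j' y_i$) actually lies in $B$ or in $(XY)^{-1}$, since these memberships are precisely what licences commuting $r$ or $r'$ across the middle. Once these groupoid-style identities $X X^{-1} = A$ and $X^{-1}X = B$ are kept carefully in view, the whole argument runs nearly in parallel with the proof of Proposition \ref{miyashita}.
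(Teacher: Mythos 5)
Your proof is correct, but it takes a genuinely different route from the paper's. The paper does not redo the dual-basis computations at the level of ring elements: it introduces the mutually inverse ring isomorphisms $h^A : \Endom_{A,R}(AR) \rightarrow C_{AR}(A)$, $f \mapsto f(1_A)$, and $h_A : C_{AR}(A) \rightarrow \Endom_{A,R}(AR)$, $c \mapsto (ar \mapsto car)$, observes that any candidate $\sigma^X$ must satisfy $h_A \circ \sigma^X \circ h^B = (\cdot)^X$ by the uniqueness clause of Proposition \ref{miyashita}, and therefore defines $\sigma^X := h^A \circ (\cdot)^X \circ h_B$; the explicit formula, the ring-isomorphism property, $\sigma^A = \identity$ and $\sigma^{XY} = \sigma^X \circ \sigma^Y$ are then all inherited from parts (a)--(e) of Proposition \ref{miyashita} with no further calculation. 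Your direct approach instead re-verifies well-definedness, membership in $C_{AR}(A)$, multiplicativity and functoriality by hand; all of these checks go through (the one containment you should state explicitly is $y_i a x_j \in X^{-1}AX \subseteq X^{-1}X = B$, which holds because $AX \subseteq X$ as $X$ is a left $A$-submodule, and in the uniqueness step you need $\sigma(r)1_A = \sigma(r)$, which follows since $\sigma(r) \in C_{AR}(A) \subseteq AR$ commutes with $1_A$). What the paper's route buys is economy and a conceptual explanation of why $\sigma^X$ is exactly the Miyashita action transported along the isomorphism $C_{AR}(A) \cong \Endom_{A,R}(AR)$; what your route buys is a self-contained argument that makes the formula $\sigma^X(r) = \sum_i x_i r y_i$ and the role of the identities $XX^{-1}=A$, $X^{-1}X=B$ completely transparent, at the cost of repeating computations already performed in Proposition \ref{miyashita}.
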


\begin{proof}
For each subring $A$ of $R$, define maps
$h^A : \Endom_{A,R}(AR) \rightarrow C_{AR}(A)$
and
$h_A : C_{AR}(A) \rightarrow \Endom_{A,R}(AR)$
by $h^A(f) = f(1_A)$, for $f \in \Endom_{A,R}(AR)$,
respectively $h_A(c)(ar) = car$, for $c \in C_{AR}(A)$,
$a \in A$ and $r \in R$. It is clear that
$h^A$ and $h_A$ are well defined ring homomorphisms
satisfying $h^A \circ h_A = {\rm id}_{C_{AR}(A)}$
and $h_A \circ h^A = {\rm id}_{\Endom_{A,R}(AR)}$.
Suppose that $X : B \rightarrow A$ is in $\Grd(R)$ and that
there is a ring isomorphism
$\sigma^X : C_{BR}(B) \rightarrow C_{AR}(A)$
with the property that
$\sigma^X(r) x = x r$, for $r \in C_{BR}(B)$
and $x \in X$. By the above, it follows that
for each $f \in \Endom_{B,R}(BR)$ the map
$(h_A \circ \sigma^X \circ h^B)(f)
\in \Endom_{A,R}(AR)$ satisfies
$$(h_A \circ \sigma^X \circ h^B)(f)(xr) =
h_A (\sigma^X (h^B(f))) (xr) =$$
$$= \sigma^X(h^B(f)) xr =
x h^B(f) r = x f(1_B) r$$
for all $x \in X$ and all $r \in R$;
by uniqueness, we get that
$(h_A \circ \sigma^X \circ h^B)(f) = f^X$.
Hence, if $r \in C_{BR}(B)$, then we get that
$$\sigma^X(r) = (h^A \circ h_A \circ \sigma^X
\circ h^B \circ h_B)(r) =
(h^A \circ (\cdot)^X \circ h_B)(r) =$$
$$= h^A(h_B(r)^X) = h_B(r)^X(1_A) = \sum_{i=1}^n x_i r y_i.$$
By Proposition \ref{miyashita}(a)-(e), it follows
that $\sigma^X$ is a ring isomorphism
satisfying $\sigma^A = {\rm id}_{C_{AR}(A)}$
and $\sigma^{XY} = \sigma^X \circ \sigma^Y$.
\end{proof}

\begin{remark}\label{ringremark}
If we for each subring $A$ of $R$, consider
the ring $C_{AR}(A)$ to be an abelian category with
one object $AR$, then the disjoint union
$C(R) := \biguplus C_{AR}(A)$, where the union
runs over all subrings $A$ of $R$, has an induced
structure of an abelian category.
Therefore, {\rm Proposition \ref{ringproposition}}
can be formulated by saying that
the action of $\Grd(R)$ on $\Hom_S$
defined in {\rm Remark \ref{firstremark}} induces
a unique additive action \ $\widehat{}$ \ of $\Grd(R)$ on $C(R)$
subject to the condition that
for each $X : B \rightarrow A$ in $\Grd(R)$,
the equality
$\widehat{X}(r) x = x r$ holds
for all $r \in C_{BR}(B)$ and all $x \in X$.
\end{remark}

The commutant $C_A(A)$ is called the \emph{center}
of $A$ and is denoted by $Z(A)$.

\begin{proposition}\label{centerring}
For any $X : B \rightarrow A$ in $\Grd(R)$
there is a unique ring isomorphism
$\sigma^X : Z(B) \rightarrow Z(A)$
with the property that
$\sigma^X(r) x = x r$, for $r \in Z(B)$
and $x \in X$.
If we choose a positive integer $n$ and
$x_i \in X$ and $y_i \in X^{-1}$, for
$i \in \{1 , \ldots , n\}$, satisfying
$\sum_{i=1}^n x_i y_i = 1_A$, then
$\sigma^X (r) = \sum_{i=1}^n x_i r y_i$,
for $r \in Z(B)$.
Moreover, $\sigma^A = {\rm id}_{Z(A)}$
and if $X : B \rightarrow A$ and
$Y : C \rightarrow B$ belong to $\Grd(R)$,
then $\sigma^{XY} = \sigma^X \circ \sigma^Y$.
\end{proposition}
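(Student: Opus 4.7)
The plan is to deduce this proposition directly from Proposition \ref{ringproposition} by observing that the ring isomorphism $\sigma^X : C_{BR}(B) \to C_{AR}(A)$ already constructed there restricts to a ring isomorphism on centers. The key observation is that $Z(B) = B \cap C_{BR}(B)$ and $Z(A) = A \cap C_{AR}(A)$, so what has to be checked is that $\sigma^X$ sends the ``being in $B$'' condition to the ``being in $A$'' condition.

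The main step is the following. Fix $r \in Z(B)$ and choose $x_i \in X$, $y_i \in X^{-1}$ with $\sum_{i=1}^n x_i y_i = 1_A$, as in the statement. By Proposition \ref{ringproposition}, $\sigma^X(r) = \sum_{i=1}^n x_i r y_i$ belongs to $C_{AR}(A)$. Since $r \in B$ and $X$ is an $A$-$B$-bimodule, each product $x_i r$ lies in $XB \subseteq X$, and therefore $x_i r y_i \in X X^{-1}$. Summing, $\sigma^X(r) \in XX^{-1} = A$. Combining, $\sigma^X(r) \in A \cap C_{AR}(A) = Z(A)$. Thus $\sigma^X$ restricts to an additive map $Z(B) \to Z(A)$, and it is a ring homomorphism because the unrestricted map is.

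To see this restriction is bijective, I would apply the same observation to the inverse morphism $X^{-1} : A \to B$ in $\Grd(R)$. Proposition \ref{ringproposition} applied twice, together with the composition rule, gives $\sigma^X \circ \sigma^{X^{-1}} = \sigma^{XX^{-1}} = \sigma^A = \mathrm{id}_{C_{AR}(A)}$ and similarly on the other side, so $\sigma^{X^{-1}} = (\sigma^X)^{-1}$. The same argument as above, with the roles of $A,B,X,X^{-1}$ exchanged, shows that $\sigma^{X^{-1}}$ maps $Z(A)$ into $Z(B)$. Hence the restrictions are mutually inverse ring isomorphisms $Z(B) \leftrightarrow Z(A)$.

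Uniqueness and the remaining properties are essentially free: if $\tau : Z(B) \to Z(A)$ is any ring homomorphism with $\tau(r) x = x r$ for all $r \in Z(B)$, $x \in X$, then multiplying on the right by $y_i$ and summing yields $\tau(r) = \tau(r) \cdot 1_A = \sum_{i=1}^n \tau(r) x_i y_i = \sum_{i=1}^n x_i r y_i$, so $\tau$ is determined. The formula for $\sigma^X(r)$, the identity $\sigma^A = \mathrm{id}_{Z(A)}$, and the composition law $\sigma^{XY} = \sigma^X \circ \sigma^Y$ all transfer from Proposition \ref{ringproposition} since the restrictions to centers respect them. The one point that requires a moment of thought, and which I would treat as the main obstacle, is the verification that $\sum_i x_i r y_i$ actually lies in $A$ when $r \in Z(B)$; everything else is bookkeeping.
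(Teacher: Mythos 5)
Your proof is correct and follows the same route as the paper, which simply states that the result ``follows immediately from Proposition \ref{ringproposition}''; you have supplied the one nontrivial detail behind that claim, namely that $\sum_i x_i r y_i \in XBX^{-1} \subseteq XX^{-1} = A$ so that the isomorphism $C_{BR}(B) \to C_{AR}(A)$ really does restrict to the centers $Z(B) = B \cap C_{BR}(B) \to Z(A) = A \cap C_{AR}(A)$. All the remaining transfers (uniqueness, the explicit formula, $\sigma^A = \identity_{Z(A)}$, and $\sigma^{XY} = \sigma^X \circ \sigma^Y$) are handled correctly.
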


\begin{proof}
This follows immediately from
Proposition \ref{ringproposition}.
\end{proof}

\begin{remark}\label{centerremark}
If we for each subring $A$ of $R$, consider
the ring $Z(A)$ to be an abelian category with
one object $A$, then the disjoint union
$Z_R := \biguplus Z(A)$, where the union
runs over all subrings $A$ of $R$, has an induced
structure of an abelian category.
Therefore, {\rm Proposition \ref{centerring}}
can be formulated by saying that
the action of $\Grd(R)$ on $\Hom_S$
defined in {\rm Remark \ref{firstremark}} induces
a unique additive action \ $\widehat{}$ \ of $\Grd(R)$ on $Z_R$
subject to the condition that
for each $X : B \rightarrow A$ in $\Grd(R)$,
the equality $\widehat{X}(r) x = x r$ holds
for all $r \in Z(B)$ and all $x \in X$.
\end{remark}

\section{Graded Rings}\label{commutants}

At the end of this section,
we prove Theorem \ref{maintheorem}.
To achieve this, we first show
three propositions concerning rings graded by
categories and, in particular, groupoids.

\begin{proposition}\label{cancellable}
Let $R$ be a locally unital ring graded by a category $G$.
\begin{enumerate}[{\rm (a)}]
\item If $s \in G$ is an isomorphism, then
$R_s R_{s^{-1}} = R_{c(s)}$
if and only if
$R_s R_t = R_{st}$ for all $t \in G$
with $d(s)=c(t)$.
In particular, if $G$ is a groupoid (or group),
then $R$ is strongly graded if and only if
$R_s R_{s^{-1}} = R_{c(s)}$, for all $s \in G$.

\item Suppose that $R$ is strongly graded. If $s \in
G$ is an isomorphism, then
$R_s$ is finitely generated and projective,
both as a left $R_{c(s)}$-module and a right $R_{d(s)}$-module.
In particular, if $G$ is a groupoid then the same
conclusion holds for each $s \in G$.

\item The ring $R$ is unital if and only if $R=R_{H}=\bigoplus_{s\in H} R_s$ for
a subcategory $H$ of $G$ with finitely many objects.
The subcategory $H$ may be chosen so that $1_{R_e}$
is nonzero for all $e \in \ob(H)$.
\end{enumerate}
\end{proposition}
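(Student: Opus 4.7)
For part (a), the reverse implication is immediate by setting $t=s^{-1}$. For the forward implication, I would combine local unitality with the hypothesis $R_sR_{s^{-1}}=R_{c(s)}$: any $r\in R_{st}$ equals $1_{R_{c(s)}}r$, and expanding $1_{R_{c(s)}}=\sum a_ib_i$ with $a_i\in R_s$, $b_i\in R_{s^{-1}}$ gives $r=\sum a_i(b_ir)\in R_sR_t$, using that $(s^{-1},st)$ is composable with composite $t$. Combined with the filter inclusion $R_sR_t\subseteq R_{st}$ this gives equality. The ``in particular'' statement is then immediate since every morphism in a groupoid is an isomorphism.

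For part (b), my approach is to realise $R_s$ as an invertible submodule in a suitable subring so that Proposition~\ref{projective} applies directly. Set $R':=R_{c(s)}+R_{d(s)}+R_s+R_{s^{-1}}$, which is a subring of $R$ by the filter condition and contains the unital subrings $R_{c(s)}$ and $R_{d(s)}$. Strong grading gives $R_sR_{s^{-1}}=R_{c(s)}$ and $R_{s^{-1}}R_s=R_{d(s)}$, so $R_s : R_{d(s)}\to R_{c(s)}$ is a morphism in $\Grd(R')$ with inverse $R_{s^{-1}}$; Proposition~\ref{projective} then yields finite generation and projectivity on both sides in one stroke. The groupoid case follows by specialisation, as in (a).

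For part (c), in the forward direction I would decompose $1_R=\sum_{u\in F}1_u$ with $F\subseteq\mor(G)$ finite and $0\neq 1_u\in R_u$, and take $H$ to be the full subcategory of $G$ on the finite object set $E:=\{c(u),d(u):u\in F\}$. For any $s\in G$ and $r_s\in R_s$, the expansions $r_s=1_Rr_s$ and $r_s=r_s1_R$ reduce — after the filter kills non-composable products — to sums indexed by $u\in F$ with $d(u)=c(s)$, respectively $c(u)=d(s)$; if $c(s)\notin E$ or $d(s)\notin E$ these sums are empty, forcing $r_s=0$, hence $R=R_H$. The nonvanishing of each $1_u$ together with local unitality ($R_u=1_{R_{c(u)}}R_u$) forces $R_{c(u)},R_{d(u)}\neq 0$, giving $1_{R_e}\neq 0$ for every $e\in E$. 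For the converse, I would set $1_R:=\sum_{e\in\ob(H)}1_{R_e}$ (a finite sum) and verify on each homogeneous $r_s$, $s\in H$, that only the term $e=c(s)$ (on the left) and $e=d(s)$ (on the right) contributes, by local unitality.

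The one step I expect to require genuine care is the forward half of (c): translating the finiteness of the decomposition of $1_R$ into finiteness of the object set of a subcategory. The filter property — the vanishing of $R_uR_v$ for $(u,v)\notin G^{(2)}$ — is what makes the full subcategory on $E$ the right candidate, since it restricts which $u\in F$ can possibly interact with a given $R_s$. Everything else amounts to direct manipulation of the strong grading axioms.
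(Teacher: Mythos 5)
Your parts (a) and (c) are correct and match the paper's own argument in all essentials: (a) is the paper's computation $R_{st}=R_{c(s)}R_{st}=R_sR_{s^{-1}}R_{st}\subseteq R_sR_t$ written element-wise, and in (c) you arrive at the same subcategory (the full subcategory on the objects $e$ with $1_{R_e}\neq 0$) by a slightly more direct route --- the paper first shows that the support of $1_R$ consists only of identity morphisms, so that $1_R=\sum_{e}1_{R_e}$, a step you bypass; nothing is lost for the statement being proved.

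The one genuine error is in (b): the set $R':=R_{c(s)}+R_{d(s)}+R_s+R_{s^{-1}}$ is \emph{not} a subring of $R$ in general. If $d(s)=c(s)$ and $s$ has order at least $4$ in the group $G_{c(s)}$, then $R_sR_s\subseteq R_{s^2}$ and $R_{s^2}$ need not lie in $R'$; this already happens in the paper's own Example \ref{simpleexamples}(a), where $G=\mathbb{Z}_4$, $s=1$ and $R_1R_1\subseteq R_2\not\subseteq R_0+R_1+R_3$. The detour is, however, unnecessary: the standing hypotheses of Section \ref{MiyashitaAction} only require $A$ and $B$ to be unital subrings of a possibly non-unital ambient ring, so you may take the ambient ring to be $R$ itself. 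Local unitality makes $R_s$ a unital $R_{c(s)}$-$R_{d(s)}$-bimodule, and strong grading gives $R_sR_{s^{-1}}=R_{c(s)}$ and $R_{s^{-1}}R_s=R_{d(s)}$, so $R_s:R_{d(s)}\rightarrow R_{c(s)}$ lies in $\Grd(R)$ and Proposition \ref{projective} applies directly --- which is exactly the paper's one-line proof of (b).
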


\begin{proof}
(a) The ''if'' statement is clear. Now we show the
''only if'' statement. Take $(s,t) \in G^{(2)}$ and
suppose that $R_s R_{s^{-1}} = R_{c(s)}$.
Then, by the assumptions we get that
$R_s R_t \subseteq R_{st} = R_{c(s)} R_{st} =
R_s R_{s^{-1}} R_{st} \subseteq R_s R_{s^{-1}st} =
R_s R_t$. Therefore, $R_s R_t = R_{st}$.
The last part follows immediately.

(b) This follows from Proposition \ref{projective}.

(c) The ''if'' statement is clear since $\ob(H)$ is finite,
then $\sum_{e\in\ob(H)} 1_{R_e}$ is an identity element of $R$.
Now we show the ''only if'' statement of the claim. Suppose
that $R$ has an identity element $1_R = \sum_{s\in G} r_s$ for
some $r_s \in R_s$, for $s\in G$, such that $r_s = 0$
for all but finitely many $s\in G$.
Take $e,f \in \ob(G)$. If $e\neq f$, then
$0 = 1_{R_e} 1_{R_f} = 1_{R_e} 1_R 1_{R_f} = \sum_{s\in G} 1_{R_e} r_s 1_{R_f} =
\sum_{s\in G_{e,f}} r_s$.
This implies that $r_s=0$ for all $s\in G$ with $d(s)\neq c(s)$.
Also $1_{R_e} = 1_{R_e} 1_{R_e} = 1_{R_e} 1_R 1_{R_e} = \sum_{s\in G} 1_{R_e} r_s 1_{R_e} = \sum_{s\in G_{e}} r_s$.
This implies that $r_e = 1_{R_e}$ and that $r_s = 0$ for all nonidentity $s\in G$ with $d(s)=c(s)$.
Therefore $1_R = \sum_{e\in \ob(G)} 1_{R_e}$ which in turn implies that $1_{R_e} = 0$ for all but
finitely many $e\in \ob(G)$. Put $H = \{s\in G \, \mid \, 1_{R_{d(s)}} = 1_{R_{c(s)}} \neq 0\}$.
Then $H$ is a finite object subcategory of $G$ satisfying $R = R_H$.
\end{proof}

In general there is not any obvious connection between local unitality and unitality of a graded ring. This is illustrated by the following remark.

\begin{remark}
{\rm (a)} If $R$ is a unital ring graded by a cancellative category, then $R$ is also a locally unital ring.
Indeed, let us write $1_R = \sum_{s\in G} 1_s$ where $1_s \in R_s$ for $s\in G$. If $t \in G$, then $1_t = 1_R 1_t = \sum_{s \in G} 1_s 1_t$.
Since $G$ is cancellative, this implies that $1_s 1_t = 0$ whenever $s \in G \setminus \ob(G)$.
Therefore, if $s \in G \setminus \ob(G)$, then $1_s = 1_s 1_R = \sum_{t \in G} 1_s 1_t = 0$.
It is clear that $\{ 1_e \}_{e \in \ob(G)}$ is a set of local units for $R$.

{\rm (b)} The conclusion in {\rm (a)} does not hold if $G$ is not cancellative. 
Indeed, let $G=\{e,s\}$ be the monoid with $e^2=e$, $s^2=s$ and $es=se=s$.
Define
\begin{displaymath}
R=
\left(\begin{array}{ccc}
	\C & \C & 0 \\
	\C & \C & 0 \\
	0 & 0 & \C \\
\end{array}\right)
\quad
R_e =	\left(\begin{array}{ccc}
	0 & 0 & 0 \\
	0 & 0 & 0 \\
	0 & 0 & \C \\
\end{array}\right)
\quad
R_s =
	\left(\begin{array}{ccc}
	\C & \C & 0 \\
	\C & \C & 0 \\
	0 & 0 & 0 \\
\end{array}\right).
\end{displaymath}
Then $R=R_e \bigoplus R_s$ is a unital $G$-graded ring which is not a locally unital ring.

{\rm (c)} There are examples of $G$-graded rings $R$ which are non-unital, but locally unital.
Indeed, suppose that $G$ is a category with $\ob(G)$ infinite
and that $K$ is a non-trivial ring which is unital. Let $R=KG$ be the category algebra
of $G$ over $K$ (this is sometimes called a quiver algebra
of $G$ over $K$, see e.g. \cite{DerksenWeyman}). 
Recall that $KG$ is the set of formal sums $\sum_{s\in G} k_s u_s$
where $k_s\in K$, for $s\in G$, and $k_s = 0$ for all but finitely many $s\in G$.
The addition on $KG$ is defined by $\sum_{s\in G} k_s u_s + \sum_{s\in G} k_s' u_s = \sum_{s\in G} (k_s + k_s') u_s$
and the multiplication is defined as the bilinear extension of the rule $(k_s u_s) (k'_t u_t) = k_s k'_t u_{st}$
for $s,t\in G$ and $k_s,k'_t\in K$ if $c(t)=d(s)$ and $(k_s u_s) (k'_t u_t) =0$ otherwise.
If we put $R_s = K u_s$, for $s\in G$, then $R=\bigoplus_{s\in G} R_s$ is a (strongly) $G$-graded ring.
For each $e\in \ob(G)$, it is clear that $R_e$ is a unital ring with identity $1_K u_e$.
This makes $R$ a locally unital ring. However, from {\rm Proposition \ref{cancellable}(c)}
and the fact that $\ob(G)$ is infinite, it follows that $R$ is non-unital.
\end{remark}

\begin{definition}\label{action}
Suppose that $R$ is a locally unital ring strongly graded by a
groupoid $G$.
By Proposition \ref{ringproposition} we can use the invertible $R_{c(s)}$-$R_{d(s)}$-bimodules $R_s$, for $s\in G$,
to define a subgroupoid $C(R,G)$ of $\Grd(R)$ with
$C_{R_{G_e}}(R_e)$,
for $e \in \ob(G)$, as objects,
and the ring isomorphisms \linebreak
$C_{R_{G_{d(s)}}}(R_{d(s)}) \rightarrow C_{R_{G_{c(s)}}}(R_{c(s)})$,
for $s \in G$, as morphisms. In the sequel, these will be denoted by $\sigma_s$.
\end{definition}

\begin{proposition}\label{functor}
Suppose that $R$ is a locally unital ring strongly graded by a
groupoid $G$. Then the association of each $e \in \ob(G)$
and each $s \in G$ to the ring $C_{R_{G_e}}(R_e)$
and the function
$\sigma_s : C_{R_{G_{d(s)}}}(R_{d(s)}) \rightarrow C_{R_{G_{c(s)}}}(R_{c(s)})$,
respectively,
defines a functor of groupoids $\sigma : G \rightarrow C(R,G)$.
Moreover, $\sigma$ is uniquely defined on morphisms
given that the relation $\sigma_s(x)r_s = r_s x$ holds
for all $s \in G$, all $x \in C_R(R_{d(s)})$ and all $r_s \in R_s$.
\end{proposition}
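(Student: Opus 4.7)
The plan is to obtain the functor $\sigma$ by pulling back the machinery of Section~\ref{MiyashitaAction} along the assignment $s \mapsto R_s$. First I observe that, by Proposition~\ref{cancellable}(a), strong grading gives $R_s R_{s^{-1}} = R_{c(s)}$ and $R_{s^{-1}} R_s = R_{d(s)}$ for each $s \in G$, while local unitality makes $R_s$ a unital $R_{c(s)}$-$R_{d(s)}$-bimodule. Hence $R_s : R_{d(s)} \rightarrow R_{c(s)}$ is a morphism in $\Grd(R)$ with inverse $R_{s^{-1}}$. Note that Definition~\ref{DefEtt} places no unitality requirement on $R$ itself, so this is legitimate even though $R$ need not be unital.

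Proposition~\ref{ringproposition} applied to $X = R_s$ then delivers a unique ring isomorphism $\sigma^{R_s} : C_{R_{d(s)} R}(R_{d(s)}) \rightarrow C_{R_{c(s)} R}(R_{c(s)})$ characterized by $\sigma^{R_s}(x) r_s = r_s x$ for $r_s \in R_s$. The key bookkeeping step is to identify $C_{R_{d(s)} R}(R_{d(s)})$ with the two rings $C_R(R_{d(s)})$ and $C_{R_{G_{d(s)}}}(R_{d(s)})$ appearing in the statement and in Definition~\ref{action}: writing any $r \in C_R(R_{d(s)})$ as $r = \sum_t r_t$ according to the grading and applying the relation $1_{R_{d(s)}} r = r 1_{R_{d(s)}}$ forces $r_t = 0$ whenever $t \notin G_{d(s)}$, so all three candidate rings coincide. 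Performing the analogous argument at $c(s)$ justifies setting $\sigma_s := \sigma^{R_s}$, and these are precisely the maps of Definition~\ref{action}.

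Functoriality then reduces to quoting the remaining clauses of Proposition~\ref{ringproposition}. The identity clause $\sigma^A = \mathrm{id}$ yields $\sigma_e = \mathrm{id}_{C_{R_{G_e}}(R_e)}$ for every $e \in \ob(G)$, since the identity morphism of $R_e$ in $\Grd(R)$ is $R_e$ itself. For composable $(s,t) \in G^{(2)}$, strong grading gives $R_s R_t = R_{st}$, matching the composition law of $\Grd(R)$, so the composition clause $\sigma^{XY} = \sigma^X \circ \sigma^Y$ becomes $\sigma_{st} = \sigma_s \circ \sigma_t$. Uniqueness of $\sigma_s$ subject to the intertwining relation $\sigma_s(x) r_s = r_s x$ is the uniqueness assertion of Proposition~\ref{ringproposition}, once translated through the centralizer identification. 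The only step requiring actual work rather than direct citation is thus the commutant identification in the second paragraph, which is what makes the abstract results of Section~\ref{MiyashitaAction} directly applicable to the groupoid-graded setting.
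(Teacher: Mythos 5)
Your overall route is the same as the paper's: the published proof of Proposition \ref{functor} is a one-line citation of Proposition \ref{ringproposition}, and you supply the details that make that citation legitimate --- namely that $R_s : R_{d(s)} \rightarrow R_{c(s)}$ is a morphism of $\Grd(R)$ (strong grading gives $R_sR_{s^{-1}}=R_{c(s)}$ and $R_{s^{-1}}R_s=R_{d(s)}$, local unitality gives the unital bimodule structure, and Section \ref{MiyashitaAction} does not require $R$ itself to be unital), and that the functor equations follow from $\sigma^A = \identity_{C_{AR}(A)}$ and $\sigma^{XY}=\sigma^X\circ\sigma^Y$ together with $R_sR_t = R_{st}$. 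All of that is correct and is exactly the content the paper leaves implicit.

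However, one claim in your second paragraph is false: $C_R(R_{d(s)})$ does \emph{not} coincide with $C_{R_{G_{d(s)}}}(R_{d(s)})$ in general. Your argument --- that $1_{R_{d(s)}}r = r1_{R_{d(s)}}$ forces $r_t=0$ for $t\notin G_{d(s)}$ --- only kills the components $r_t$ for which exactly one of $c(t)$, $d(t)$ equals $d(s)$; a component with $c(t)\neq d(s)$ and $d(t)\neq d(s)$ is annihilated by $1_{R_{d(s)}}$ on both sides and survives. Indeed, Theorem \ref{maintheorem} with $H=\{e\}$ gives $C_R(R_e)=C_{R_{G_e}}(R_e)\oplus\bigoplus_{f\neq e}R_{G_f}$, and Example \ref{simpleexamples}(b) exhibits this concretely: $C_R(R_e)=C_{R_{G_e}}(R_e)+R_{G_f}$. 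Fortunately the error is not load-bearing. What Proposition \ref{ringproposition} actually requires is only the identification of $C_{R_{d(s)}R}(R_{d(s)})$ with $C_{R_{G_{d(s)}}}(R_{d(s)})$, and there your argument does work: by strong grading $R_{d(s)}R=\bigoplus_{t:\,c(t)=d(s)}R_t$, so every homogeneous component of an element of $R_{d(s)}R$ already satisfies $c(t)=d(s)$, and the relation $r_t1_{R_{d(s)}}=1_{R_{d(s)}}r_t=r_t$ then forces $d(t)=d(s)$ as well. You should therefore drop the claim about $C_R(R_{d(s)})$ and read the uniqueness clause of the statement as referring to $x$ in the domain $C_{R_{G_{d(s)}}}(R_{d(s)})\subseteq C_R(R_{d(s)})$ of $\sigma_s$; with that repair the proof is complete.
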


\begin{proof}
This follows immediately from Proposition \ref{ringproposition} (or Remark \ref{ringremark}).
\end{proof}

\begin{remark}
Suppose that $R$ is a locally unital ring strongly graded by a groupoid $G$.
Take $s \in G$.
By {\rm Proposition \ref{cancellable}(a)} and the
equalities $R_{c(s)} = R_s R_{s^{-1}}$
and $R_{d(s)} = R_{s^{-1}} R_s$ it follows that
$R_{d(s)}=0$ if and only if $R_{c(s)}=0$;
in that case $\sigma_s$ is of course the zero map.
If one wants to avoid such maps one may,
by {\rm Proposition \ref{cancellable}(c)},
assume that all components of $R$ are nonzero
and in particular that each ring $R_e$, for $e \in \ob(G)$,
has a nonzero identity element.
\end{remark}

\begin{definition}
Suppose that $R$ is a locally unital ring strongly graded by a
groupoid $G$. By abuse of notation,
we let $Z(R,G)$ denote the subcategory of $C(R,G)$
having $Z(R_e)$, for $e \in \ob(G)$, as objects,
and the ring isomorphisms
$Z(R_{d(s)}) \rightarrow Z(R_{c(s)})$,
for $s \in G$, as morphisms.
\end{definition}

\begin{proposition}\label{functorcenter}
Suppose that $R$ is a locally unital ring strongly graded by a
groupoid $G$. Then the association of each $e \in \ob(G)$
and each $s \in G$ to the ring
$Z(R_e)$ and the function
$\sigma_s : Z(R_{d(s)}) \rightarrow Z(R_{c(s)})$, respectively,
defines a functor of groupoids
$\sigma : G \rightarrow Z(R,G)$.
Moreover, $\sigma$ is uniquely defined on morphisms
given that the relation $\sigma_s(x)r_s = r_s x$ holds
for all $s \in G$, all $x \in Z(R_{d(s)})$ and all $r_s \in R_s$.
\end{proposition}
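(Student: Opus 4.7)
The plan is to deduce this proposition directly from Proposition \ref{centerring}, following essentially the same pattern used to prove Proposition \ref{functor} via Proposition \ref{ringproposition}.

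First, I would verify that for each morphism $s \in G$, the component $R_s$ is a morphism $R_{d(s)} \rightarrow R_{c(s)}$ in $\Grd(R)$, with inverse $R_{s^{-1}}$. The ring $R_{c(s)}$ is unital (local unitality) and $R_s$ is a unital $R_{c(s)}$-$R_{d(s)}$-bimodule by the local unital assumption. Since $G$ is a groupoid and $R$ is strongly graded, Proposition \ref{cancellable}(a) gives $R_s R_{s^{-1}} = R_{c(s)}$ and, applying the same statement to $s^{-1}$, $R_{s^{-1}} R_s = R_{d(s)}$. Hence $R_s : R_{d(s)} \rightarrow R_{c(s)}$ lies in $\Grd(R)$.

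Next, I would apply Proposition \ref{centerring} to $X = R_s$ to obtain, for each $s \in G$, a unique ring isomorphism $\sigma_s := \sigma^{R_s} : Z(R_{d(s)}) \rightarrow Z(R_{c(s)})$ satisfying $\sigma_s(x) r_s = r_s x$ for all $x \in Z(R_{d(s)})$ and all $r_s \in R_s$. This simultaneously supplies the morphisms of the proposed functor and the uniqueness clause asserted at the end of the statement.

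Finally, I would verify the two functorial axioms. For each $e \in \ob(G)$, the identity morphism of $e$ in $G$ is $e$ itself, and $R_e$ is the identity morphism of $R_e$ in $\Grd(R)$; the last sentence of Proposition \ref{centerring} gives $\sigma_e = \sigma^{R_e} = \mathrm{id}_{Z(R_e)}$. For composable $(s,t) \in G^{(2)}$, strong grading gives $R_s R_t = R_{st}$ in $\Grd(R)$, so Proposition \ref{centerring} yields $\sigma_{st} = \sigma^{R_s R_t} = \sigma^{R_s} \circ \sigma^{R_t} = \sigma_s \circ \sigma_t$. These two identities say exactly that $\sigma : G \rightarrow Z(R,G)$ is a functor of groupoids.

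There is no real obstacle here: the only point requiring a small check is the identification of $R_s$ as an invertible bimodule in the sense of Definition \ref{DefEtt}, which is immediate from Proposition \ref{cancellable}(a) and local unitality. Everything else is a transcription of Proposition \ref{centerring} to the groupoid graded setting, exactly in parallel to how Proposition \ref{functor} was derived from Proposition \ref{ringproposition}.
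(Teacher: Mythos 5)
Your proposal is correct and follows exactly the route the paper takes: the paper's proof is the one-line remark that the result ``follows immediately from Proposition \ref{centerring} (or Remark \ref{centerremark})'', and your argument simply spells out the details of that deduction (identifying each $R_s$ as an invertible bimodule via Proposition \ref{cancellable}(a) and local unitality, then reading off existence, uniqueness, and functoriality from Proposition \ref{centerring}). No issues.
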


\begin{proof}
This follows immediately from Proposition \ref{centerring} 
(or Remark \ref{centerremark}).
\end{proof}

\noindent {\bf Proof of Theorem \ref{maintheorem}.}
Suppose that $y = \sum_{s \in G} y_s \in C_R(R_H)$
where $y_s \in R_s$, for $s \in G$, and
$y_s = 0$ for all but finitely many $s \in G$.
Since $1_e y = y 1_e$, for $e \in \ob(G)$, we get that
$y_s = 0$ whenever $c(s) \neq d(s)$.
Therefore, we get that $y = \sum_{e \in \ob(G)} x_e$, where
$x_e := \sum_{s \in G_e} y_s \in R_{G_e}$, for $e \in \ob(G)$.
Since $y \in C_R(R_H) \subseteq C_R(R_e)$, for $e \in \ob(H)$,
we get that $x_e \in C_{R_{G_e}}(R_e)$, for $e \in \ob(H)$.
Take $s \in H$. By the last part of Proposition \ref{functor}
and the fact that the equality
$r_s y = y r_s$ holds for all $r_s \in R_s$, we get that
$\sigma_s(x_{d(s)}) = x_{c(s)}$.
On the other hand, it is clear, by Proposition \ref{functor}, that
all sums of the form $\sum_{e \in \ob(G)} x_e$,
with $x_e \in R_{G_e}$, when $e \in \ob(G) \setminus \ob(H)$,
and $x_e \in C_{R_{G_e}}(R_e)$, for $e \in \ob(H)$, satisfying
$\sigma_s(x_{d(s)}) = x_{c(s)}$, for $s \in H$,
belong to $C_R(R_H)$. {\hfill $\square$}

\section{Examples}\label{examples}

In this section, we show Theorem \ref{groupoidexample} and
illustrate it in two cases (see Example \ref{simpleexamples}). Our
method will be to generalize, to category graded rings (see
Proposition \ref{categorygradedex}), the construction given in
\cite{das99} for the group graded situation. In order to do this, we first
need to introduce some additional notation. Let $K$ be a commutative ring
with $1_K \neq 0$ and suppose that $G$ is a category. Fix a positive integer $n$ and
choose $s_i \in G$, for $1 \leq i \leq n$. Put $S = \{ s_i \mid 1
\leq i \leq n \}$. If $1 \leq i,j \leq n$, then let $e_{ij} \in M_n(K)$
be the matrix with $1_K$ in the $ij$:th position and 0 elsewhere. For $s
\in G$, we let $R_s$ be the left $K$-submodule of $M_n(K)$ spanned
by the set
$\{ e_{ij} \mid 1 \leq i,j \leq n, \ (s_i,s) \in G^{(2)}, \ s_i s = s_j \}.$
With the above notation, the following result holds.

\begin{proposition}\label{categorygradedex}
If we put $R := \sum_{s \in G} R_s$, then
\begin{enumerate}[{\rm (a)}]
\item the collection of left $K$-modules $R_s$, for $s \in G$,
of $R$ is a $G$-filter in $R$;

\item if $s_i s \in S$, for all $(s_i,s) \in (S \times G) \cap G^{(2)}$, then $R_s$, for $s \in G$, is a strong
$G$-filter in $R$;

\item if $G = S$, then $R_s \neq \{ 0 \}$ for $s \in G$;

\item if $d(s_i) \in S$, for $i\in \{1,\ldots,n\}$,
then $R$ has an identity element given by $\sum_{f \in \ob(G)} 1_f$,
where for each $f \in \ob(G)$, the element $1_f \in R_f$ is the sum
of all $e_{ii}$ satisfying $d(s_i)=f$;

\item if $G$ is cancellative, then the collection
of left $K$-modules $R_s$, for $s \in G$, of $R$ makes $R$ a graded
ring;

\item if $G$ is a groupoid and $G = S$, then $R_s$, for $s \in G$,
makes $R$ a unital strongly graded ring with $R_s \neq \{ 0 \}$, for $s \in G$.
\end{enumerate}
\end{proposition}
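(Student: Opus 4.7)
The whole argument reduces to the matrix-unit product rule $e_{ij}e_{kl}=\delta_{jk}e_{il}$ together with a direct analysis of the spanning set. For $s \in G$, set $I_s := \{(i,j) : (s_i,s) \in G^{(2)},\ s_i s = s_j\}$, so that $R_s$ is the $K$-span of $\{e_{ij} : (i,j)\in I_s\}$. Two observations drive the proof: first, for $(i,j) \in I_s$ one has $d(s_i)=c(s)$ and $d(s_j)=d(s)$; second, since $s_i \cdot 1_{d(s_i)} = s_i$, every matrix unit $e_{ii}$ lies in $R_{1_{d(s_i)}}$.

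For (a), take $e_{ij}\in R_s$ and $e_{kl}\in R_t$. The product vanishes unless $j=k$, in which case $d(s)=d(s_j)=d(s_k)=c(t)$ forces $(s,t)\in G^{(2)}$, and then $(i,l) \in I_{st}$ via $s_i(st)=s_is\cdot t = s_kt = s_l$. This verifies both halves of the $G$-filter axiom. For (b), given $(i,l) \in I_{st}$, the hypothesis $s_is \in S$ produces an index $k$ with $s_k = s_is$, so $(i,k) \in I_s$, $(k,l) \in I_t$, and $e_{ik}e_{kl}=e_{il} \in R_s R_t$. For (c), any $s\in G=S$ equals some $s_j$, and since $1_{c(s)} \in G = S$ equals some $s_i$ one obtains $(i,j) \in I_s$. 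For (d), each $e_{ii}$ lies in $R_{1_{d(s_i)}}$, so each $1_f = \sum_{d(s_i)=f} e_{ii}$ lies in $R_f \subseteq R$; summing over $f \in \ob(G)$ yields $\sum_{i=1}^n e_{ii} = I_n$, which is the multiplicative identity of $M_n(K)$ and hence of $R$.

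For (e), fix $(i,j)$; if $s_is = s_j = s_is'$ with both pairs in $G^{(2)}$, then $s_i$ being a monomorphism gives $s = s'$, so the sets $I_s$, for $s \in G$, are pairwise disjoint and the subspaces $R_s$ are $K$-linearly independent inside $M_n(K)$; the sum $R = \sum_{s \in G} R_s$ is therefore direct. For (f), if $G$ is a groupoid with $G=S$, then every hypothesis of (b), (c), (d), (e) is automatic: any composition of morphisms stays in $\mor(G)=S$; every identity morphism lies in $\mor(G)=S$; and groupoids are cancellative. Assembling (a)--(e) delivers the conclusion.

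The main obstacle is part (b), where one must exhibit an explicit intermediate index $k$ realising the factorisation of $e_{il} \in R_{st}$ through a product $e_{ik}e_{kl}$; this is exactly what the closure condition $s_is \in S$ provides. A secondary subtlety appears in (e), where the crucial property is \emph{left} cancellation of each $s_i$ (the monomorphism half of cancellativity), which ensures the result applies to arbitrary cancellative categories rather than only to groupoids.
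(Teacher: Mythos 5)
Your proof is correct and follows essentially the same route as the paper's: matrix-unit arithmetic for (a) and (b), the explicit basis element $e_{ij}$ with $s_i=c(s)$, $s_j=s$ for (c), the diagonal idempotents for (d), disjointness of the index sets via cancellation for (e), and assembly for (f). If anything, your treatment of (a) is slightly more complete, since you explicitly verify that $R_sR_t=\{0\}$ when $(s,t)\notin G^{(2)}$, a case the paper's proof leaves implicit.
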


\begin{proof}
(a) Suppose that $(s,t) \in G^{(2)}$. Take $e_{ij} \in R_s$ and
$e_{lk} \in R_t$. If $j \neq l$, then $e_{ij} e_{lk} = 0 \in
R_{st}$. Now let $j = l$. Then, since $s_i s = s_j$ and $s_j t =
s_k$, we get that $s_i st = s_j t = s_k$. Hence, $e_{ij} e_{jk} =
e_{ik} \in R_{st}$.

(b) Take $(s,t) \in G^{(2)}$ and $e_{ik} \in R_{st}$.
Then $s_i st = s_k$. Since $s_i s \in S$ there is
$s_j \in S$ with $s_i s = s_j$. This means that $e_{ij} \in R_s$.
Moreover, $s_j t = s_i s t = s_k$ which yields $e_{jk} \in R_t$.
Hence $e_{ik} = e_{ij} e_{jk} \in R_s R_t$.

(c) Take $s \in G$. Since $G = S$, there is $s_i,s_j \in S$
with $s_i = c(s)$ and $s_j = s$. Therefore $s_i s = c(s)s = s = s_j$.
Hence $e_{ij} \in S$ which, in turn, implies that $R_s \neq \{ 0 \}$.

(d) Take $s \in G$ and suppose that $e_{jk} \in R_s$ for some $j,k
\in \{ 1,\ldots,n \}$. By the assumptions we get that $d(s_j) \in
S$. Therefore $e_{jj} \in R_{d(s_j)}$ and hence $\sum_{f \in \ob(G)}
1_f e_{jk} = e_{jj} e_{jk} = e_{jk}$. In the same way $\sum_{f \in
\ob(G)} e_{jk} 1_f = e_{jk}$.

(e) Let $X_s$ denote the collection of pairs $(i,j)$, where $1 \leq
i,j \leq n$, such that $(s_i,s) \in G^{(2)}$ and $s_i s = s_j$.
Suppose that $s \neq t$. Seeking a contradiction suppose that $X_s
\cap X_t \neq \emptyset$. Then there are integers $k$ and $l$,
with $1 \leq k,l \leq n$, such that $s_k s = s_l = s_k t$. By the
cancellability of $G$ this implies that $s = t$ which is
a contradiction. Therefore, the
sets $X_s$, for $s \in G$, are pairwise disjoint. The claim now
follows from (a) and the fact that
$R_s = \sum_{(i,j) \in X_s} Ke_{ij}$ for all $s \in G$.

(f) This follows immediately from (a), (b), (c), (d) and (e).
If we use Proposition \ref{cancellable}(a) the strongness
condition can be proven directly in the following way.
Take $s \in G$ and $s_i \in S$. Since $G=S$ there is
$s_j \in S$ with $s_i s = s_j$. This means that $e_{ij} \in R_s$.
Since $G$ is a groupoid we get that $s_j s^{-1} = s_i$,
i.e. $e_{ji} \in R_{s^{-1}}$. Therefore
$e_{ii} = e_{ij} e_{ji} \in R_s R_{s^{-1}}$.
\end{proof}

\noindent {\bf Proof of Theorem \ref{groupoidexample}.}
We first consider the case when $G$ is connected.
If $G$ only has one object, then it is a group in which case
it has already been treated in \cite{das99}.
Therefore, from now on, we assume that we can choose
two different objects $e$ and $f$ from $G$.
We denote the morphisms of $G$
by $t_1,t_2,\ldots,t_n$. For technical reasons,
we suppose that $d(t_1) = f$, $c(t_1) = e$ and $t_n = e$.
Let us now choose $n+1$ morphisms $s_1,s_2,\ldots,s_{n+1}$ from $G$
in the following way; $s_i = t_i$, when $1 \leq i \leq n$, and $s_{n+1} = t_n$.
Now we define $R$ according to the beginning of this section.
By Proposition \ref{categorygradedex}(f), the ring $R$ is strongly $G$-graded
and each $R_s$, for $s \in G$, is nonzero.

We shall now show that the morphism $t := t_1$
has the desired property.
Let $m$ denote the cardinality of the set of
$s \in G$ with $d(g)=e$.
The component $R_e$ is the left $K$-module
spanned by the collection of $e_{ij}$ with $s_i e = s_j$,
that is, such that $s_i = s_j$ and $d(s_j)=e$.
By the construction of $S$ it follows that
the $K$-dimension of $R_e$ equals $m+3$.
Analogously, the component $R_{t_1}$ is the
left $K$-module spanned by the collection of $e_{ij}$
with $s_i t_1 = s_j$. Since $d(t_1) = f \neq e$,
this implies that the $K$-dimension of $R_{t_1}$
equals $m+1$.
Seeking a contradiction, suppose that $R_{t_1}$ is free on some generators
$u_l$, $1 \leq l \leq d$, as a left $R_e$-module.
Then the map $\theta : R_e^d \rightarrow
R_{t_1}$, defined by $\theta(x_1,\ldots,x_d) =
\sum_{l=1}^d x_l u_l$, for $x_l \in R_e$, for $l \in \{1,\ldots,d\}$, is,
in particular, an isomorphism of left $K$-modules.
Since ${\rm dim}_K(R_e^d) = d(m+3) > m+1 = {\rm dim}_K(R_{t_1})$,
this is impossible.

We shall now show that our groupoid $G$,
in the general case, is the disjoint union of connected groupoids.
Define an equivalence relation $\sim$ on $\ob(G)$ by saying
that $e \sim f$, for $e,f \in \ob(G)$, if there is a morphism
in $G$ from $e$ to $f$. Choose a set $E$ of representatives
for the different equivalence classes defined by $\sim$.
For each $e \in E$, let $[e]$ denote the equivalence class
to which $e$ belongs. Let $G_{[e]}$ denote the subgroupoid
of $G$ with $[e]$ as set of objects and morphisms $s \in G$
with the property that $c(s),d(s) \in [e]$.
Then each $G_{[e]}$, for $e \in E$, is a connected groupoid
and $G = \biguplus_{e \in E} G_{[e]}$.

For each $e \in E$, we now wish to define
a strongly $G_{[e]}$-graded ring $R_{[e]}$.
We consider three cases.
If $G_{[e]} = \{ e \}$, then let $R_{[e]} = K$.
If $[e] = \{ e \}$ but the group $G_{[e]}$ contains
a nonidentity morphism $t$, then let $R_{[e]}$
be any strongly $G_{[e]}$-graded ring with
the desired property (following \cite{das99}).
If $[e]$ has more than one element, let $R_{[e]}$
denote the strongly $G_{[e]}$-graded ring constructed
in the first part of the proof.
We may define a new ring to be the direct sum $\bigoplus_{e \in E} R_{[e]}$
which is strongly graded by $G$ and has the desired property.
{\hfill $\square$}

\begin{example}\label{simpleexamples}
We have chosen nontrivial examples of graded
rings $R$ in the sense
that not all graded components $R_s$ are free
left $R_{c(s)}$-modules.
In the free case the groupoid action is defined
by a single conjugation which makes the analysis
easier; in the general
case the action is a sum of such maps.

{\rm (a)} Suppose that $G$ is the cyclic additive
group ${\Bbb Z}_4 = \{ 0,1,2,3 \}$.
Using the notation from the proof of
{\rm Theorem \ref{groupoidexample}} above,
we put
$$s_1 = 0 \quad s_2 = 1 \quad s_3 = 2 \quad s_4 = s_5 = 3$$
Then $R:= M_5(K)$ is a strongly
${\Bbb Z}_4$-graded ring with components defined by
$$
\begin{array}{lcl}
R_0 &=& Ke_{11} + Ke_{22} + Ke_{33} + Ke_{44} + Ke_{45} +
Ke_{54} + Ke_{55} \\
R_1 &=& Ke_{12} + Ke_{23} + Ke_{34} + Ke_{35} + Ke_{41} + Ke_{51} \\
R_2 &=& Ke_{13} + Ke_{24} + Ke_{25} + Ke_{31} + Ke_{42} + Ke_{52} \\
R_3 &=& Ke_{14} + Ke_{15} + Ke_{21} + Ke_{32} + Ke_{43} + Ke_{53}
\end{array}
$$
By a straightforward calculation, we get that
$$
C_R(R_0) = Ke_{11} + Ke_{22} + Ke_{33} + K(e_{44}+e_{55}).
$$
It is easy to see that
$$\sigma_1(x) = e_{12}xe_{21} + e_{23}xe_{32} +
e_{34}xe_{43} + e_{41}xe_{14} + e_{51}xe_{15}$$
and hence that
$$\sigma_2(x) = \sigma_1^2(x) = e_{13}xe_{31} + e_{24}xe_{42} + e_{31}xe_{13}
+e_{42}xe_{24} + e_{52}xe_{25}$$
for all $x \in C_R(R_0)$.
If we put $H = \{ 0,2 \}$, then,
by {\rm Theorem \ref{commutanttheorem}}, we get that
$$ C_R(R_H) = C_R(R_0)^H = C_R(R_0)^{ \{ 2 \} } = K(e_{11} + e_{33}) +
K(e_{22}+e_{44} + e_{55})$$
and
$$Z(R) = C_R(R) = C_R(R_0)^{{\Bbb Z}_4} =
C_R(R_0)^{ \{ 1 \} } = K1_R.$$

{\rm (b)} Now suppose that $G$ is the groupoid with
two objects $e$ and $f$
and nonidentity morphisms $\alpha : e \rightarrow e$,
$\beta : f \rightarrow f$, $u_0 : f \rightarrow e$,
$u_1 : f \rightarrow e$, $t_0 : e \rightarrow f$
and $t_1 : e \rightarrow f$ with composition given by
the following relations
$$\alpha^2 = e \quad \alpha u_0 = u_1 \quad \alpha u_1 = u_0
\quad u_0 \beta = u_1 \quad u_1 \beta = u_0$$
$$\beta^2 = f \quad \beta t_0 = t_1 \quad \beta t_1 = t_0
\quad t_0 \alpha = t_1 \quad t_1 \alpha = t_0$$
$$u_0 t_0 = e \quad u_1 t_0 = \alpha \quad
u_0 t_1 = \alpha \quad u_1 t_1 = e$$
$$t_0 u_0 = f \quad t_0 u_1 = \beta \quad t_1 u_0 = \beta \quad t_1 u_1 = f$$
Using the notation from the proof of
{\rm Theorem \ref{groupoidexample}} above, we put
$$s_1 = f \quad s_2 = \beta \quad s_3 = u_0 \quad s_4 = u_1$$
$$s_5 = t_0 \quad s_6 = t_1 \quad s_7 = \alpha \quad s_8 = s_9 = e$$
Now we define the strongly $G$-graded subring $R$ of $M_9(K)$
according to the beginning of this section.
A straightforward calculation shows that
$$
\begin{array}{lcl}
R_e        &=& Ke_{55} + Ke_{66} + Ke_{77} + Ke_{88} + Ke_{89} + Ke_{98} + Ke_{99} \\
R_{\alpha} &=& Ke_{56} + Ke_{65} + Ke_{78} + Ke_{79} + Ke_{87} + Ke_{97} \\
R_{t_0}    &=& Ke_{15} + Ke_{26} + Ke_{38} + Ke_{39} + Ke_{47} \\
R_{t_1}    &=& Ke_{16} + Ke_{25} + Ke_{37} + Ke_{48} + Ke_{49} \\
R_f        &=& Ke_{11} + Ke_{22} + Ke_{33} + Ke_{44} \\
R_{\beta}  &=& Ke_{12} + Ke_{21} + Ke_{34} + Ke_{43} \\
R_{u_0}    &=& Ke_{51} + Ke_{62} + Ke_{74} + Ke_{83} + Ke_{93} \\
R_{u_1}    &=& Ke_{52} + Ke_{61} + Ke_{73} + Ke_{84} + Ke_{94}
\end{array}
$$
By a straightforward calculation we get that
$$C_{R_{G_e}}(R_e) = Ke_{55}+Ke_{66} + Ke_{77}+K(e_{88}+e_{99})$$
and
$$C_{R_{G_f}}(R_f) = Ke_{11}+Ke_{22} + Ke_{33}+Ke_{44}.$$
It is easy to see that
$$\sigma_{\alpha}(x) = e_{56}xe_{65} + e_{65}xe_{56} +
e_{78}xe_{87} + e_{87}xe_{78} + e_{97}xe_{79}$$
for all $x \in C_{R_{G_e}}(R_e)$
and that
$$\sigma_{\beta}(y) = e_{12}xe_{21} + e_{21}xe_{12} +
e_{34}xe_{43} + e_{43}xe_{34}$$
for all $y \in C_{R_{G_f}}(R_f)$.
Now we use this and {\rm Theorem \ref{maintheorem}} to compute $C_R(R_H)$
for all eleven subgroupoids $H$ of $G$:
$$H_1 = \{ e \} \quad H_2 = \{ f \} \quad H_3 = \{ e,f \} \quad
H_4 = G_e \quad H_5 = G_f$$
$$H_6 = \{ e,f,\alpha \} \quad H_7 = \{ e,f,\beta \} \quad H_8 = \{ e,f,\alpha,\beta \}$$
$$H_9 = \{ e,f,t_0,u_0 \} \quad H_{10} = \{ e,f,t_1,u_1 \} \quad H_{11} = G$$
We immediately get that
$$C_R(R_{H_1}) = C_R(R_e) = C_{R_{G_e}}(R_e) + R_{G_f} =$$
$$= Ke_{55} + Ke_{66}+ Ke_{77} + K(e_{88}+e_{99}) +
Ke_{11} + Ke_{22} + Ke_{33} + Ke_{44} + $$
$$+ Ke_{12} + Ke_{21} + Ke_{34} + Ke_{43}$$
and similarly that
$$C_R(R_{H_2}) = C_R(R_f) = C_{R_{G_f}}(R_f) + R_{G_e} =$$
$$= Ke_{11} + Ke_{22} + Ke_{33} + Ke_{44} +
Ke_{55} + Ke_{66} + Ke_{77} + Ke_{88} + $$
$$+ Ke_{89} + Ke_{98} + Ke_{99} + Ke_{56} + Ke_{65} +
Ke_{78} + Ke_{79} + Ke_{87} + Ke_{97}.$$
Furthermore, we get that
$$C_R(R_{H_3}) = C_{R_{G_f}}(R_f) + C_{R_{G_e}}(R_e) = $$
$$= Ke_{11} + Ke_{22} + Ke_{33} + Ke_{44} +
Ke_{55} + Ke_{66} + Ke_{77} + K(e_{88}+e_{99}).$$
Next we get that
$$C_R(R_{H_4}) = C_{R_{G_e}}(R_e)^{G_e} + R_{G_f} =
C_{R_{G_e}}(R_e)^{ \{ \alpha \} } + R_{G_f} =$$
$$= ( Ke_{55}+Ke_{66} + Ke_{77}+
K(e_{88}+e_{99}) )^{ \{ \alpha \} } + R_{G_f} =$$
$$= K(e_{55}+e_{66}) + K(e_{77}+e_{88}+e_{99}) + $$
$$+Ke_{11} + Ke_{22} + Ke_{33} + Ke_{44} + Ke_{12} + Ke_{21} + Ke_{34} + Ke_{43}$$
and
$$C_R(R_{H_5}) = C_{R_{G_f}}(R_f)^{G_f} + R_{G_e}=$$
$$= (Ke_{11}+Ke_{22} + Ke_{33}+Ke_{44})^{ \{ \beta \} } + R_{G_e} =$$
$$= K(e_{11}+e_{22}) + K(e_{33}+e_{44}) +
Ke_{55} + Ke_{66} + Ke_{77} + Ke_{88} + Ke_{89} + Ke_{98} + Ke_{99}$$
$$+ Ke_{56} + Ke_{65} + Ke_{78} + Ke_{79} + Ke_{87} + Ke_{97}.$$
By the above calculations, we get that
$$C_R(R_{H_6}) = C_{R_{G_e}}(R_e)^{G_e} + C_{R_{G_f}}(R_f) =$$
$$= K(e_{55}+e_{66}) + K(e_{77}+e_{88}+e_{99}) +
Ke_{11}+Ke_{22} + Ke_{33}+Ke_{44}$$
and
$$C_R(R_{H_7}) = C_{R_{G_f}}(R_f)^{G_f} + C_{R_{G_e}}(R_e) =$$
$$= K(e_{11}+e_{22}) + K(e_{33}+e_{44}) +
Ke_{55}+Ke_{66} + Ke_{77} + K(e_{88} + e_{99})$$
and
$$C_R(R_{H_8}) = C_{R_{G_e}}(R_e)^{G_e} + C_{R_{G_f}}(R_f)^{G_f} =$$
$$= K(e_{55}+e_{66}) + K(e_{77}+e_{88}+e_{99}) +
K(e_{11}+e_{22}) + K(e_{33}+e_{44}).$$
By a straightforward calculation, we get that
$$\sigma_{t_0}(x) = e_{15}xe_{51} + e_{26}xe_{62}+
e_{39}xe_{93} + e_{47}xe_{74}$$
and
$$\sigma_{t_1}(x) = e_{16}xe_{61} + e_{25}xe_{52}+
e_{37}xe_{73} + e_{48}xe_{84}$$
for all $x \in C_{R_{G_e}}(R_e)$.
By the above calculations, we get that
$$C_R(R_{H_9}) = \{ x + \sigma_{t_0}(x) \mid x \in C_{R_{G_e}}(R_e) \} =$$
$$= K(e_{11}+e_{55}) + K(e_{22}+e_{66}) + K(e_{44}+e_{77})+
K(e_{33}+e_{88}+e_{99})$$
and
$$C_R(R_{H_{10}}) = \{ x + \sigma_{t_1}(x) \mid x \in C_{R_{G_e}}(R_e) \} =$$
$$= K(e_{22}+e_{55}) + K(e_{11}+e_{66}) + K(e_{33}+e_{77})+
K(e_{44}+e_{88}+e_{99})$$
and
$$C_R(R_{H_{11}}) = Z(R) =
\{ x + \sigma_{t_0}(x) \mid x \in C_{R_{G_e}}(R_e)^{G_e} \} =$$
$$= K(e_{11} + e_{22} + e_{55} + e_{66}) +
K(e_{33} + e_{44} + e_{77} + e_{88} + e_{99}).$$
\end{example}


\end{document}